\titleformat{\section}{\normalfont\fillast\scshape}{\thesection.}{.5em}{}
\titleformat{\subsection}[runin]{\normalfont\bfseries}{\thesubsection.}{.5em}{}[.]
\setlist[description]{%
	font={\normalfont\itshape},
	}
\renewcommand\labelenumi{(\textit{\roman{enumi}})}
\renewcommand\theenumi\labelenumi
\newtheorem{thm}{Theorem}[subsection]
\newtheorem{lem}[thm]{Lemma}
\newtheorem{pro}[thm]{Proposition}
\newtheorem*{thm*}{Theorem}
\newtheorem*{lem*}{Lemma}
\newtheorem*{pro*}{Proposition}
\newtheorem*{cor*}{Corollary}
\newtheorem{cla*}{Claim}
\theoremstyle{definition}
\newtheorem{dfn}[thm]{Definition}
\newtheorem*{dfn*}{Definition}
\theoremstyle{remark}
\newtheorem{rmk}[thm]{Remark}
\newtheorem{exa}[thm]{Example}
\newtheorem*{rmk*}{Remark}
\newtheorem*{rmks*}{Remarks}
\newtheorem*{exa*}{Example}
\DeclareMathOperator{\Hom}{Hom}
\DeclareMathOperator{\End}{End}
\DeclareMathOperator{\id}{id}
\DeclareMathOperator{\JW}{JW}
\DeclareMathOperator{\Proj}{Proj}
\newcommand{\cohdeg}[1]{\scriptstyle\textcolor{green}{#1}}
\newcommand{\Ao}{A^\bullet}
\newcommand{\Bo}{B^\bullet}
\newcommand{\C}{\mathcal{C}}
\newcommand{\Cb}{\mathcal{C}^{\mathrm{b}}}
\newcommand{\CC}{\mathscr{C}}
\newcommand{\hh}{\mathfrak{h}}
\newcommand{\HH}{\mathbf{H}}
\newcommand{\ZZ}{\mathbb{Z}}
\newcommand{\D}{\mathscr{H}}
\newcommand{\DBS}{\D_{\mathrm{BS}}}
\newcommand{\Kb}{\mathcal{K}^{\mathrm{b}}}
\newcommand{\K}{\mathcal{K}}
\newcommand{\un}{\mathbbm{1}}
\newcommand{\boi}{\mathbf{i}}
\newcommand{\boj}{\mathbf{j}}
\newcommand{\uw}{\underline{w}}
\newcommand{\uz}{\underline{z}}
\newcommand{\ux}{\underline{x}}
\newcommand{\uom}{\underline{\omega}}
\newcommand{\kk}{\Bbbk}
\newcommand{\Roudg}[1]{F_{#1}^\bullet}
\newcommand{\Laur}{\ZZ[v,v^{-1}]}
\newcommand{\cword}{S^*}
\newcommand{\bword}{\Sigma^*}
\newcommand{\chr}{\mathrm{char}}
\newcommand{\stdmix}{\Delta^{\mathrm{mix}}}
\newcommand{\cosmix}{\nabla^{\mathrm{mix}}}
\newcommand{\gr}[1]{\textcolor{green}{#1}}
\newcommand{\re}[1]{\textcolor{red}{#1}}
\newcommand{\bl}[1]{\textcolor{blue}{#1}}
\newcommand{\worsum}[1]{C_{#1}}
\definecolor{green}{rgb}{0,.5,0}
\tikzset{%
	virtual/.style  = {font=\scriptsize,draw=black,circle,dashed,inner sep=2pt},
	real/.style     = {circle,inner sep=2pt,font=\scriptsize},
	patch/.style    = {gray,pattern=north west lines, pattern color=gray},
	positive/.style = {line width=2pt},
	negative/.style = {line width=.25pt,double distance=1.5pt},
	braid/.style = {thick,double distance=2pt,rounded corners},
}
\tikzset{%
	pics/dot/.style n args={1}{%
		code={%
			\draw (0,0) -- (0,#1);
			\fill (0,#1) circle (1.5pt); 		
			}
	}
}
\def\d{.5cm}\def\h{1cm}\def\hdot{.4cm}
\colorlet{s}{red}
\colorlet{t}{blue}
\colorlet{u}{green}
\colorlet{v}{yellow}
\colorlet{g}{violet}
\colorlet{dgdegree}{green!40!black}
\tikzset{%
	pics/topdot/.style n args={3}{%
		code= {%
			\StrLen{#1}[\length]%
			\def \d{0.35}%
			\def \h{1}%
			\node[anchor=east,inner sep=0pt] at ({-(\length+1)*\d/2},0) {$#3$};%
			\foreach \i in {1,...,\length}{%
				\StrChar{#1}{\i}[\color]%
				\ifthenelse{\equal{\color}{O}}{%
					\draw[->] ({-(\length+1)*\d/2+\i*\d},-\h/2)--({-(\length+1)*\d/2+\i*\d},{1/15*\h/2});%
					\draw ({-(\length+1)*\d/2+\i*\d},{1/15*\h/2})--({-(\length+1)*\d/2+\i*\d},\h/2);%
				}%
				{%
					\ifthenelse{\equal{\color}{o}}{%
						\draw ({-(\length+1)*\d/2+\i*\d},-\h/2)--({-(\length+1)*\d/2+\i*\d},{-1/15*\h/2});%
						\draw[<-] ({-(\length+1)*\d/2+\i*\d},{-1/15*\h/2})--({-(\length+1)*\d/2+\i*\d},\h/2);%
					}%
					{%
						\ifthenelse{\i=#2}{%
							\draw[draw=\color] ({-(\length+1)*\d/2+\i*\d},-\h/2)--({-(\length+1)*\d/2+\i*\d},0);%
							\fill[fill=\color] ({-(\length+1)*\d/2+\i*\d},0) circle (2pt);%
						}%
						{%
							\draw[draw=\color] ({-(\length+1)*\d/2+\i*\d},-\h/2)--({-(\length+1)*\d/2+\i*\d},\h/2);%
						}%
					}%
				}%
			}%
		}%
	}
}
\tikzset{%
	pics/bottomdot/.style n args={3}{%
		code= {%
			\StrLen{#1}[\length]%
			\def \d{0.35}%
			\def \h{1}%
			\ifthenelse{\equal{\length}{1}}{%
					\node[anchor=east,inner sep=0pt] at ({-(\length)*\d/2},{.2*\h}) {$#3$};%
				}%
				{%
					\node[anchor=east,inner sep=0pt] at ({-(\length+1)*\d/2},0) {$#3$};%
				}
			\foreach \i in {1,...,\length}{%
				\StrChar{#1}{\i}[\color]%
				\ifthenelse{\equal{\color}{O}}{%
					\draw[->] ({-(\length+1)*\d/2+\i*\d},-\h/2)--({-(\length+1)*\d/2+\i*\d},{1/15*\h/2});%
					\draw ({-(\length+1)*\d/2+\i*\d},{1/15*\h/2})--({-(\length+1)*\d/2+\i*\d},\h/2);%
				}%
				{%
					\ifthenelse{\equal{\color}{o}}{%
						\draw ({-(\length+1)*\d/2+\i*\d},-\h/2)--({-(\length+1)*\d/2+\i*\d},{-1/15*\h/2});%
						\draw[<-] ({-(\length+1)*\d/2+\i*\d},{-1/15*\h/2})--({-(\length+1)*\d/2+\i*\d},\h/2);%
					}%
					{%
						\ifthenelse{\i=#2}{%
							\draw[draw=\color] ({-(\length+1)*\d/2+\i*\d},\h/2)--({-(\length+1)*\d/2+\i*\d},0);%
							\fill[fill=\color] ({-(\length+1)*\d/2+\i*\d},0) circle (2pt);%
						}%
						{%
							\draw[draw=\color] ({-(\length+1)*\d/2+\i*\d},-\h/2)--({-(\length+1)*\d/2+\i*\d},\h/2);%
						}%
					}%
				}%
			}%
		}%
	}
}
\title{\bfseries Reducing Rouquier complexes}
\author{Leonardo Maltoni}
\begin{document}
	\maketitle
	\begin{abstract}
		We describe reduced representatives for positive 
		Rouquier complexes.
		These are obtained via \emph{Morse theoretical} Gaussian elimination from the corresponding 
		standard representatives. The underlying graded object is still a direct sum of
		Bott-Samelson objects, but over subwords rather than subexpressions.
	\end{abstract}
	\section*{Introduction}
			Rouquier complexes were introduced in \cite{Rou_cat} to study actions of the (generalized)
			braid group on categories. 
			
			These actions appeared in representation theory already in the works of Carlin
			\cite{Car} or Rickard \cite{Rick}, and 
			a precise definition was made by Deligne \cite{Deli}.
			They describe higher symmetries 
			of the categories acted upon, usually giving rich information about them. A classical example is the 
			action of the braid group $B_W$ on the bounded derived category $\mathcal{O}$ associated to a complex semisimple Lie algebra $\mathfrak{g}$, with Weyl group $W$.
			A geometric counterpart is the action on the bounded derived category of 
			sheaves over the flag variety $G/B$ associated with a reductive algebraic group $G$ and 
			a Borel subgroup $B$.
			
			Rouquier pointed out the interest of the category of self-equivalences 
			induced by such an action. So he introduced the 
			\textit{2-braid group} $\mathscr{B}_W$, that upgrades $B_W$, for any Coxeter system $(W,S)$, to a category and serves as a 
			model for this purpose.
			It is a subcategory of $\K(\D)$, the homotopy category of the \emph{Hecke category} $\D$, and its 
			objects, called \emph{Rouquier complexes}, are all possible tensor products between certain 
			\emph{standard} and \emph{costandard} complexes $F_s$ and $F_s^{-1}$, for $s\in S$. 
			Such products satisfy, up to homotopy, the relations of the braid group 
			so their isomorphism classes
			form at least a quotient of $B_W$. See Remark \ref{rmk_faithful} below for more details.
			
			This category also plays an important role in algebraic topology.
			Khovanov and Rozansky \cite{KhovRoz} and then Khovanov \cite{Khov} constructed 
			a triply graded link homology from Rouquier complexes.
			The idea is that the homology
			of the complex obtained computing Hochschild (co)homology of the Rouquier complex $F_\omega$
			associated to a braid $\omega$, is an invariant, up to an overall shift, of the link $\overline{\omega}$ obtained by closing $\omega$.

			When working with Rouquier complexes, it is useful to consider appropriate representatives 
			in the category $\C(\D)$ of complexes. The natural ones are just the tensor products between 
			the standard and costandard complexes. 
			When the category $\D$ has a complete local coefficient ring 
			(see Theorem \ref{thm_soergcat}), any complex 
			admits a \emph{minimal subcomplex}: a summand which is homotopy equivalent to the original complex and
			has no contractible summand. This is very hard to 
			find in general.
			In this	paper we find an intermediate reduction for positive (or negative) Rouquier complexes 
			(i.e.\ with only $F_s$'s, or, resp., only $F_s^{-1}$'s) which is 
			available with no restriction on the coefficients. 

			As a first application, this result has been used in \cite{Mal_wak} to study the extension groups between 
			Wakimoto sheaves in the homotopy category of the affine Hecke category in type $\tilde{A_1}$
			over $\mathbb{Z}$.
			\subsection{Diagrammatic Hecke category}
			As we mentioned, Rouquier complexes are certain objects of the homotopy 
			category of the Hecke category $\D$. The original definition was in terms of Soergel bimodules. 
			In this paper we will use the diagrammatic presentation of $\D$ due to Elias 
			and Williamson \cite{EW}.
			 
			The generating objects, denoted $B_s$, correspond to simple reflections, and are 
			represented as colored points (one color for each $s\in S$). 
			Products between these generators are called \emph{Bott-Samelson} objects. They correspond
			to words and are represented by sequences of colored points on a line.
			The morphisms between 
			Bott-Samelson objects are described by certain \emph{diagrams} inside the strip
			$\mathbb{R}\times [0,1]$. Roughly, these are equivalence classes (with respect to certain relations) of
			planar graphs, obtained from some generating vertices, that connect the sequences of 
			points corresponding to the source and the target.
			This \emph{diagrammatic} Hecke category is available for any Coxeter system, equipped with a \emph{realization} (i.e.\ a 
			finite rank representation over the coefficient ring satisfying certain properties).
			For more details see \S \ref{sec_diagHeck} below. Under certain conditions this category is equivalent to 
			that of Soergel bimodules and has a geometric counterpart, when $W$ is a Weyl group, given by 
			equivariant parity sheaves on the corresponding flag variety.

			Let us pass to the homotopy category $\K(\D)$. For any $s\in S$, the standard and costandard 
			complexes $F_s$ and $F_{s}^{-1}$ are defined as:
			\begin{equation}\label{eq_stdcostd}
				\begin{tikzcd}[row sep=tiny, column sep=0.7cm,%
								execute at end picture={%
									\def\hdott{.32cm}
									\pic[red,yshift=.3em] at (A) {dot={\hdott}};
									\pic[red,yshift=1.5em,yscale=-1] at (B) {dot={\hdott}};
									}]
					F_{\re{s}}=\,\cdots\ar[r]		& 0 \ar[r]		& 0	\ar[r]								& B_{\re{s}}\ar[r,""{coordinate,name=A}]	& \un(1)\ar[r]	& 0\ar[r]		& \cdots \\
																& \cohdeg{-2}	& \cohdeg{-1}							& \cohdeg{0}											& \cohdeg{1}	& \cohdeg{2}	& \\
					F_{\re{s}}^{-1}=\,\cdots\ar[r]	& 0\ar[r]		& \un(-1)\ar[r,""{coordinate,name=B}]	& B_{\re{s}}\ar[r]							& 0	\ar[r]		& 0\ar[r]		& \cdots \\
				\end{tikzcd}			
			\end{equation}
			where the only non-zero morphisms are represented by their diagrams in $\D$ and
			the numbers in the middle row denote the cohomological degree in $\K(\D)$. 
			Rouquier complexes are all the possible tensor products between the $F_{s}^{\pm 1}$'s.
			\subsection{Results}
			We consider a \emph{positive} Rouquier complex, of the form:
			\[
				F_{\uw}^\bullet=F_{s_1}F_{s_2}\dots F_{s_n}			
			\]
			for $s_i\in S$. As a graded object, this is the direct sum of Bott-Samelson 
			objects indexed by the $2^n$ subexpressions of $\uw=s_1s_2\dots s_n$ (i.e., the sequences in $\{0,1\}^n$).
			We define a new complex $R^\bullet_{\uw}$ which, 
			as a graded object, is a direct sum indexed over \emph{subwords} (each of which can correspond
			to several subexpressions). More precisely, for each subword $\ux$ of $\uw$, let $C_{\ux}$ be the Bott-Samelson object 
			$B_{\ux^*}(\ell(\ux^*)-\ell(\ux))$, where $\ux^*$ is the word obtained from $\ux$ by contracting
			all sequences of repeated letters to a single occurrence (for example, if $\ux=ssttss$, then $\ux^*=sts$).
			The graded piece $R^q_{\uw}$ is the sum, over all the 
			subwords $\ux$ of $\uw$ such that $\ell(\uw)-\ell(\ux)=q$, 
			of the $C_{\ux}$'s. The differential is defined in a natural way: 
			roughly speaking, there is an arrow from $C_{\ux}$ to $C_{\uz}$ when $\uz$ is a subword of $\ux$ 
			(see \S \ref{subs_reducedcom} for more detail). The complex thus obtained is a summand of the original complex $\Roudg{\uw}$, and 
			we show the following (see Theorem \ref{thm_reduce} below).
			\begin{thm*}
				The inclusion and projection maps of the complex 
				$R^\bullet_{\uw}$ in $\Roudg{\uw}$ are mutually inverse
				homotopy equivalences.
			\end{thm*}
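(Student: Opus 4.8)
The plan is to realize the inclusion $F_{\uw}\hookrightarrow\Roudg{\uw}$ as the cumulative effect of a sequence of Gaussian eliminations, organized by induction on the length $n$ of $\uw$. As a graded object, $\Roudg{\uw}=F_{s_1}\otimes\cdots\otimes F_{s_n}$ is the direct sum, over subexpressions $e\in\{0,1\}^n$, of the Bott--Samelson object attached to the subword $\uw_e$, placed in the cohomological and internal degree recording the number of indices at which $e$ selects the costandard term $\un(1)$; the differential is the signed sum of the elementary dot morphisms $\phi\colon B_{s_i}\to\un(1)$, applied one strand at a time. Since tensoring with a fixed complex descends to a functor on the homotopy category, $-\otimes F_{s_n}$ preserves homotopy equivalences. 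Thus, assuming inductively that $F_{\uw'}\hookrightarrow\Roudg{\uw'}$ is a homotopy equivalence for $\uw'=s_1\cdots s_{n-1}$, it is already a homotopy equivalence after tensoring, and the statement reduces to the \emph{local} problem of exhibiting $F_{\uw}$ as a homotopy-equivalent summand of $F_{\uw'}\otimes F_{s_n}$, equivalently of showing that the complementary summand, built from the subexpression-pieces that do not survive, is contractible.

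First I would record the Gaussian elimination lemma in the precise form needed: if a complex contains a two-term piece $A\xrightarrow{\alpha}B$ in adjacent cohomological degrees whose component $\alpha$ is an isomorphism, then deleting $A$ and $B$ and correcting the ambient differential by the usual formula $\delta\mapsto\delta-\delta\alpha^{-1}\delta$ yields a homotopy-equivalent complex, together with an explicit homotopy equivalence realized by the evident inclusion. The combinatorial core is then to specify, inside $F_{\uw'}\otimes F_{s_n}$, a matching on the indexing subexpressions: each piece that is to disappear is paired with a neighbour differing in a single controlled position, so that the two cancelled pieces lie in adjacent cohomological degrees, and the unmatched (critical) pieces are to be exactly those indexed by the subwords of $\uw$. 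Setting up this matching so that it is \emph{complete} --- every redundant piece is cancelled and no surviving piece is disturbed --- is the bookkeeping that forces the final indexing set to pass from subexpressions to subwords.

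The main obstacle, and the point where the diagrammatics of $\D$ genuinely enters, is to prove that the differential component attached to each matched pair is an isomorphism, and to do so over an arbitrary coefficient ring. A single dot is never invertible, so the relevant component cannot be elementary: after a change of basis it is a composite produced by feeding a dot into the local configuration created by a repeated (or braiding) generator, and its invertibility is precisely the content of the one-colour Frobenius zig-zag relations --- together, where several generators interact, with the $2m_{st}$-valent relations --- all of which hold integrally and require no division, so that no coefficient is ever inverted. I would isolate this as a one- or two-generator computation, verify that the composite is $\pm\id$ on the Bott--Samelson summand in question, and only then feed it into the elimination lemma. Since these isomorphisms are maps onto \emph{summands}, the argument must be run in the additive (idempotent-complete) setting, and the matching has to be chosen so that the pieces left behind after each cancellation assemble back into honest Bott--Samelson objects rather than into indecomposables; this is exactly what keeps the answer at the ``Bott--Samelson over subwords'' level and distinct from the minimal complex.

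Finally I would read off the result: after all eliminations the surviving summands biject with the subwords of $\uw$, carrying the differential produced by the perturbation formula, and this is by definition $F_{\uw}$; the composite homotopy equivalence is, by construction, the asserted inclusion. I expect the two delicate points to be (i) verifying completeness of the matching, so that the complement is manifestly a sum of contractible two-term pieces, and (ii) controlling the iterated perturbation of the differential, since the order of eliminations affects the intermediate differentials even though the final homotopy type does not. A clean way to dispatch (ii) is to package the whole cancellation at once as a deformation retract of $F_{\uw'}\otimes F_{s_n}$ onto its critical part and to invoke the homological perturbation lemma, rather than tracking the eliminations one at a time.
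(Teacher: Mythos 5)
Your architecture is genuinely different from the paper's, and it is viable. The paper does not induct on $\ell(\uw)$: it treats the whole complex $\Roudg{\uw}$ at once, splits every Bott--Samelson summand by iterating $B_sB_s\cong B_s(-1)\oplus B_s(1)$ (the splitting maps decorated by $\delta_s$ from Example~\ref{exa_basicdeco}, which is where Demazure surjectivity enters), regroups the resulting pieces $C_{\boi,\mu}$ by the subword they project to, filters $\Roudg{\uw}$ by subwords (Lemma~\ref{lem_filtr}), and collapses each subquotient by the ``large scale'' Gaussian elimination of Lemma~\ref{corContrac}, governed by a collapsible polytopal set, with a closed-form projection $\pi$ (signs $(-1)^{|\boi|}$) verified directly to be a chain map. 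Your one-letter-at-a-time induction is in one respect easier than you fear: writing $\Roudg{\uw}$ as $\Roudg{\uw'}\otimes F_{s_n}$ and assuming the result for $\uw'$, the pieces to cancel in $F_{\uw'}\otimes F_{s_n}$ are, for each subword $\uz\preceq\uw'$ ending in $s_n$, the shifted copy $\worsum{\uz}(1)$ inside $\worsum{\uz}\otimes B_{s_n}$, paired against the term $\worsum{\uz}$ coming from the $\un(1)$ column; since the differential of $F_{s_n}$ has no backwards component, the matrix between cancelled sources and targets is diagonal in $\uz$, so your worries (i) and (ii) --- completeness of the matching and order-dependence of iterated perturbations --- evaporate: a single block application of Lemma~\ref{lem_Gausselimination} suffices at each stage, and the surviving pieces biject with subwords of $\uw$ exactly as you predict. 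What the paper's filtration-plus-collapse machinery buys instead is a uniform, explicit projection for the whole complex with coherent signs, independent of any ordering of the letters.

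Two concrete gaps remain. First, your diagrammatic diagnosis is off: the $2m_{st}$-valent relations play no role anywhere in this reduction (the paper explicitly never uses them); distinct letters never interact, and the only invertible differential components arise \emph{within monotonous runs}, after the change of basis given by the $\delta_s$-decorated splitting of Example~\ref{exa_basicdeco}, where invertibility is exactly the Frobenius unit relation (a dot fed into a trivalent vertex is the identity strand). Until you name that change of basis, your claim that the matched component ``is $\pm\id$'' is not checkable. Second, and more seriously for the induction itself: the theorem is about the specific complex $F_{\uw}$ whose differential $d_{\ux',\ux}$ is prescribed in \S\,\ref{subs_reducedcom} (the parity-dependent $\delta_s$ and $s(\delta_s)$ decorations, the sign $(-1)^{\ell(\uz_1)}$, the trivalent correction when two runs merge), so ``the surviving complex is by definition $F_{\uw}$'' is precisely the thing to be computed: you must evaluate the perturbation terms $\beta\phi^{-1}\gamma$ of Lemma~\ref{lem_Gausselimination} and match them against those formulas. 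You cannot sidestep this, because your inductive hypothesis quantifies over the explicit complex $F_{\uw'}$, not merely over some homotopy-equivalent summand; and the homological perturbation lemma controls the homotopy type but does not perform this identification for you. Both gaps are closable by finite one-color computations, but as written the proposal asserts rather than proves its two load-bearing steps.
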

			\begin{exa}
				Let $\uw=ssttss$. The complex $\Roudg{\uw}$ is drawn in Figure \ref{fig_6cube}. 
				It is a cube of dimension $6$, whose vertices are labelled by the $2^6=64$ subexpressions of 
				$ssttss$. The arrows (the edges of the cube) describe the differential map: each subexpression
				has a map towards those subexpressions that can be obtained from it by turning a 1 to a 0.
				\begin{figure}[h]
					\centering
					\begin{tikzpicture}[x=1.7cm,y=.15cm]
						\coordinate (v000000) at (0.3,0);
						\coordinate (v000001) at (1,5);\coordinate (v000010) at (1,3);\coordinate (v000100) at (1,1);\coordinate (v001000) at (1,-1);\coordinate (v010000) at (1,-3);\coordinate (v100000) at (1,-5);
						\coordinate (v000011) at (2,14);\coordinate (v000101) at (2,12);\coordinate (v000110) at (2,10);\coordinate (v001001) at (2,8);\coordinate (v001010) at (2,6);\coordinate (v001100) at (2,4);\coordinate (v010001) at (2,2);\coordinate (v010010) at (2,0);\coordinate (v010100) at (2,-2);\coordinate (v011000) at (2,-4);\coordinate (v100001) at (2,-6);\coordinate (v100010) at (2,-8);\coordinate (v100100) at (2,-10);\coordinate (v101000) at (2,-12);\coordinate (v110000) at (2,-14);
						\coordinate (v000111) at (3,19);\coordinate (v001011) at (3,17);\coordinate (v001101) at (3,15);\coordinate (v001110) at (3,13);\coordinate (v010011) at (3,11);\coordinate (v010101) at (3,9);\coordinate (v010110) at (3,7);\coordinate (v011001) at (3,5);\coordinate (v011010) at (3,3);\coordinate (v011100) at (3,1);\coordinate (v100011) at (3,-1);\coordinate (v100101) at (3,-3);\coordinate (v100110) at (3,-5);\coordinate (v101001) at (3,-7);\coordinate (v101010) at (3,-9);\coordinate (v101100) at (3,-11);\coordinate (v110001) at (3,-13);\coordinate (v110010) at (3,-15);\coordinate (v110100) at (3,-17);\coordinate (v111000) at (3,-19);
						\coordinate (v001111) at (4,14);\coordinate (v010111) at (4,12);\coordinate (v011011) at (4,10);\coordinate (v011101) at (4,8);\coordinate (v011110) at (4,6);\coordinate (v100111) at (4,4);\coordinate (v101011) at (4,2);\coordinate (v101101) at (4,0);\coordinate (v101110) at (4,-2);\coordinate (v110011) at (4,-4);\coordinate (v110101) at (4,-6);\coordinate (v110110) at (4,-8);\coordinate (v111001) at (4,-10);\coordinate (v111010) at (4,-12);\coordinate (v111100) at (4,-14);
						\coordinate (v011111) at (5,5);\coordinate (v101111) at (5,3);\coordinate (v110111) at (5,1);\coordinate (v111011) at (5,-1);\coordinate (v111101) at (5,-3);\coordinate (v111110) at (5,-5);
						\coordinate (v111111) at (5.7,0);
						\foreach \a in {0,1}{
							\foreach \b in {0,1}{
								\foreach \c in {0,1}{
									\foreach \d in {0,1}{
										\foreach \e in {0,1}{
											\foreach \f in {0,1}{
													\fill (v\a\b\c\d\e\f) circle (1pt);												
													\node (n\a\b\c\d\e\f) at (v\a\b\c\d\e\f) {};
												}
											}
										}
									}
								}
							}
						\foreach \b in {0,1}{
							\foreach \c in {0,1}{
								\foreach \d in {0,1}{
									\foreach \e in {0,1}{
										\foreach \f in {0,1}{
												\draw[-latex,gray] (n0\b\c\d\e\f)--(n1\b\c\d\e\f);
											}
										}
									}
								}
							}
						\foreach \a in {0,1}{
								\foreach \c in {0,1}{
									\foreach \d in {0,1}{
										\foreach \e in {0,1}{
											\foreach \f in {0,1}{
												\draw[-latex,gray] (n\a0\c\d\e\f)--(n\a1\c\d\e\f);												
												}
											}
										}
									}
							}
						\foreach \a in {0,1}{
							\foreach \b in {0,1}{
									\foreach \d in {0,1}{
										\foreach \e in {0,1}{
											\foreach \f in {0,1}{
												\draw[-latex,gray] (n\a\b0\d\e\f)--(n\a\b1\d\e\f);
												}
											}
										}
								}
							}
						\foreach \a in {0,1}{
							\foreach \b in {0,1}{
								\foreach \c in {0,1}{
										\foreach \e in {0,1}{
											\foreach \f in {0,1}{
												\draw[-latex,gray] (n\a\b\c0\e\f)--(n\a\b\c1\e\f);
												}
											}
										}
									}
								}
						\foreach \a in {0,1}{
							\foreach \b in {0,1}{
								\foreach \c in {0,1}{
									\foreach \d in {0,1}{
											\foreach \f in {0,1}{
												\draw[-latex,gray] (n\a\b\c\d0\f)--(n\a\b\c\d1\f);
												}
											}
										}
									}
								}
						\foreach \a in {0,1}{
							\foreach \b in {0,1}{
								\foreach \c in {0,1}{
									\foreach \d in {0,1}{
										\foreach \e in {0,1}{
												\draw[-latex,gray] (n\a\b\c\d\e0)--(n\a\b\c\d\e1);
												}
											}
										}
									}
								}
					\end{tikzpicture}
					\caption{A picture of the complex $\Roudg{\underline{ssttss}}$.}\label{fig_6cube}
				\end{figure}
				The complex $R^\bullet_{\uw}$ is showed in Figure \ref{fig_cpxssttss}. This time the vertices are labelled by the 
				23 subwords of $\uw$, and the arrows 
				(representing the new differential) connect each word with all its subwords 
				obtained by eliminating a letter (the symbol $\emptyset$ denotes the empty word).
				\begin{figure}[h]
					\centering
					\begin{tikzpicture}[x=1.5cm,y=.45cm]
						\coordinate (v) at (5.6,0);
						\coordinate (s) at (5,1);\coordinate (t) at (5,-1);
						\coordinate (st) at (4,3);\coordinate (ss) at (4,1);\coordinate (tt) at (4,-1);\coordinate (ts) at (4,-3);
						\coordinate (sst) at (3,5);\coordinate (stt) at (3,3);\coordinate (sss) at (3,1);\coordinate (sts) at (3,-1);\coordinate (tss) at (3,-3);\coordinate (tts) at (3,-5);
						\coordinate (sstt) at (2,5);\coordinate (ssts) at (2,3);\coordinate (ssss) at (2,1);\coordinate (stts) at (2,-1);\coordinate (stss) at (2,-3);\coordinate (ttss) at (2,-5);
						\coordinate (sstss) at (1,0);\coordinate (sstts) at (1,3);\coordinate (sttss) at (1,-3);
						\coordinate (ssttss) at (0,0);
						\node (nv) at (v) {$C_{\emptyset}$};
						\foreach \i in {s,t,ss,st,ts,tt,sss,sst,sts,stt,tss,tts,ssss,ssts,sstt,stss,stts,ttss,sstts,sstss,sttss,ssttss}{
							\node (n\i) at (\i) {$C_{\i}$};
						}
						\draw[-latex,gray] (nssttss)--(nsstts);\draw[-latex,gray] (nssttss)--(nsstss);\draw[-latex,gray] (nssttss)--(nsttss);
						\draw[-latex,gray] (nsstts)--(nsstt);\draw[-latex,gray] (nsstts)--(nssts);\draw[-latex,gray] (nsstts)--(nstts);
						\draw[-latex,gray] (nsstss)--(nssts);\draw[-latex,gray] (nsstss)--(nssss);\draw[-latex,gray] (nsstss)--(nstss);
						\draw[-latex,gray] (nsttss)--(nstts);\draw[-latex,gray] (nsttss)--(nstss);\draw[-latex,gray] (nsttss)--(nttss);
						\draw[-latex,gray] (nssss)--(nsss);
						\draw[-latex,gray] (nssts)--(nsst);\draw[-latex,gray] (nssts)--(nsss);\draw[-latex,gray] (nssts)--(nsts);
						\draw[-latex,gray] (nsstt)--(nsst);\draw[-latex,gray] (nsstt)--(nstt);
						\draw[-latex,gray] (nstss)--(nsts);\draw[-latex,gray] (nstss)--(nsss);\draw[-latex,gray] (nstss)--(ntss);
						\draw[-latex,gray] (nstts)--(nstt);\draw[-latex,gray] (nstts)--(nsts);\draw[-latex,gray] (nstts)--(ntts);
						\draw[-latex,gray] (nttss)--(ntts);\draw[-latex,gray] (nttss)--(ntss);
						\draw[-latex,gray] (nsss)--(nss);
						\draw[-latex,gray] (nsst)--(nss);\draw[-latex,gray] (nsst)--(nst);
						\draw[-latex,gray] (nsts)--(nst);\draw[-latex,gray] (nsts)--(nss);\draw[-latex,gray] (nsts)--(nts);
						\draw[-latex,gray] (nstt)--(nst);\draw[-latex,gray] (nstt)--(ntt);
						\draw[-latex,gray] (ntss)--(nts);\draw[-latex,gray] (ntss)--(nss);
						\draw[-latex,gray] (ntts)--(ntt);\draw[-latex,gray] (ntts)--(nts);
						\draw[-latex,gray] (nss)--(ns);
						\draw[-latex,gray] (nst)--(ns);\draw[-latex,gray] (nst)--(nt);
						\draw[-latex,gray] (nts)--(nt);\draw[-latex,gray] (nts)--(ns);
						\draw[-latex,gray] (ntt)--(nt);
						\draw[-latex,gray] (ns)--(nv);\draw[-latex,gray] (nt)--(nv);
					\end{tikzpicture}
					\caption{The complex $F_{\underline{ssttss}}$.}\label{fig_cpxssttss}
				\end{figure}
			\end{exa}
			This reduction is obtained via 
			\emph{Morse theoretical Gaussian elimination for complexes}.
			This is an adaptation to the case of complexes over additive categories of a work by 
			Sk\"oldberg \cite{Sko}, inspired by discrete Morse theory in the sense of Forman \cite{For}.
			See \S \ref{sec_morsegauss} for more details.
			\subsection*{Acknowledgments}
			This work is part of the author's PhD thesis, so first of all I would like to thank my advisors Daniel Juteau and Geordie Williamson
			for the support and all the useful discussions. I am also very grateful to Anthony Licata for pointing me 
			to discrete Morse theory. Finally I would like to thank the 
			referees for their careful reading of the manuscript that resulted in a substantial improvement of the paper.
		\section{The diagrammatic Hecke category}\label{sec_diagHeck}
		We now recall the construction, 
		by Elias and Williamson \cite{EW}, of the \emph{diagrammatic Hecke category} associated with 
		(a realization of) an arbitrary Coxeter system.	We begin with some background about Coxeter
		systems, braid groups and Hecke algebras.
		\subsection{Notation for Coxeter systems and braid groups}
		Let $(W,S)$ be a Coxeter system. For each $s,t\in S$ let $m_{st}$ be the order of $st$ in $W$ (in particular $m_{ss}=1$),
		so that
		\[
			W=\langle s\in S\mid (st)^{m_{st}}=1, \text{ if } m_{st}<\infty \rangle.
		\]
		We will restrict to the case $|S|<\infty$ of finitely generated Coxeter systems. 

		Let $\cword$ be the free monoid generated by $S$. Its elements are called \emph{Coxeter words}
		and will be denoted by underlined letters. 
		If $\uw$ is a Coxeter word, we say that it \emph{expresses}
		the element $w\in W$ if $w$ is its image via the natural morphism 
		$\cword\rightarrow W$. 
		We say that a word is \emph{monotonous} if it contains only (repetitions of) one letter.		
		Let $\ell(\uw)$ denote the length of $\uw$, i.e.\ the number of its letters.
		The word $\uw$ is said to be \emph{reduced} if there is no shorter word expressing $w$.

		For $\uw, \ux \in \cword$, we say that $\ux$ is a \emph{subword} of $\uw$, and we 
		write $\ux\preceq \uw$, 
		if $\ux$ is obtained from $\uw$ by erasing some letters. In other words,
		if $\uw=s_1s_2\cdots s_n$ with $s_i\in S$, then 
		$\ux$ is of the form $s_{i_1}s_{i_2}\cdots s_{i_r}$, 
		with $1\le i_1 < i_2 < \dots < i_r\le n$. A \emph{subexpression} is instead the datum of a subword
		together with the information of the precise positions of the letters in the original word. For a given $\uw\in \cword$, 
		this can be encoded in a \emph{01-sequence} $\boi\in\{0,1\}^n$ where the ones correspond to the letters
		that we pick and the zeros to those we do not pick. Among all 01-sequences corresponding to a given subword $\ux\preceq\uw$,
		we call \emph{primary} the least one in the lexicographic order (induced by 0<1).
		\begin{exa}
			Let $\uw=stussu$. Then $\ux=sts$ is a subword, obtained by erasing the two $u$'s and one of the two
			$s$'s between them. It corresponds to both subexpressions $110010$ and $110100$. The former is the primary one.
		\end{exa}		
		
%
		

		Recall that to a Coxeter system $(W,S)$ we associate the
		\emph{Artin-Tits group} (or \emph{generalized braid group}):
		\[
			B_W=\langle \sigma_s, s\in S\mid \underbrace{\sigma_s\sigma_t \dots}_{m_{st}}=\underbrace{\sigma_t\sigma_s \dots}_{m_{st}} \rangle.
		\]
		Let $\Sigma^+=\{\sigma_s\}_{s\in S}$,
		$\Sigma^-=\{\sigma_s^{-1}\}_{s\in S}$ and $\Sigma=\Sigma^+ \sqcup \Sigma^-$. 
		Let $\bword$ be the free monoid generated by $\Sigma$. 
		Its elements are called 
		\emph{braid words}. As above, if $\uom$ is a braid word, 
		we say that it \emph{expresses} the element $\omega\in B_W$ 
		if $\uom$ is its image via the natural morphism $\bword\rightarrow B_W$.
		A braid $\omega\in B_W$ is called \emph{positive} if it belongs to the monoid generated by $\Sigma^+$, namely
		it can be expressed by a braid word with only letters from $\Sigma^+$.		
		\subsection{The Hecke algebra}\label{subs_Heckealg}
		Consider the ring $\Laur$ of Laurent polynomial in one 
		variable with integer coefficients.
		\begin{dfn}
			The \emph{Hecke algebra} $\HH$ associated with $(W,S)$ is the 
			$\Laur$-algebra generated by $\{H_s\}_{s\in S}$
			 with relations 
			\[
				\begin{cases}
					(H_s+v)(H_s-v^{-1})=0& 	\text{for all $s\in S$}\\
					\underbrace{H_sH_t\dots}_{m_{st}}=\underbrace{H_tH_s\dots}_{m_{st}}& 		\text{if $m_{st}<\infty$}
				\end{cases}
			\]			
		\end{dfn}		 		
		Notice that $\HH$ can be seen as the quotient of the group algebra of the braid group
		$B_W$ over $\Laur$, by the above $v$-deformed version of the 
		involution relation in $W$. 
		
		For each $s\in S$ let $b_s:=H_s+v$. By the first relation, one gets
		\begin{equation}\label{eq_bsbsalgebra}
			b_sb_s=vb_s+v^{-1}b_s
		\end{equation}
		The category that we are going to describe is a \emph{categorification} 
		of the Hecke algebra. We need to introduce the notion
		of \emph{realization} of a Coxeter system.
		\subsection{Realizations of Coxeter systems}\label{sec_realCox} Let $(W,S)$ be a Coxeter system
		and $\kk$ a commutative ring. A \emph{realization}
		of $W$, in the sense of \cite[\S 3.1]{EW}, is a free,
		finite rank, $\kk$-module $\mathfrak{h}$, with distinguished elements 
		$\{\alpha_s^\vee\}_{s\in S} \subset \mathfrak{h}$ and 
		$\{ \alpha_s \}_{s\in S} \subset \mathfrak{h}^*=\Hom(\mathfrak{h},k)$,
		that we call respectively \emph{simple coroots} and \emph{simple roots}, 
		satisfying the following conditions.
		\begin{enumerate}
		\item If $\langle \cdot\,,\cdot\rangle:\mathfrak{h}^*\times \mathfrak{h}\rightarrow \kk$ is the natural
			pairing, then $\langle \alpha_s,\alpha_s^\vee\rangle =2$, for each $s\in S$.
		\item\label{item_cond2} The $\kk$-module $\mathfrak{h}$ is a representation of $W$ via 
			\begin{equation}
				s(v)=v-\langle \alpha_s, v\rangle \alpha_s^{\vee},\quad\forall v\in \mathfrak{h},\,\forall s \in S.		
			\end{equation}
			Then $W$ acts on $\mathfrak{h}^*$ by the contragredient representation, described by similar formulas:
			\begin{equation}\label{eq_realiz2}
				s(\lambda)=\lambda-\langle \lambda, \alpha_s^\vee \rangle \alpha_s,\quad\quad\forall \lambda\in \mathfrak{h}^*,\,\forall s \in S.		
			\end{equation}		
		\item The technical condition \cite[\S 3.1, (3.3)]{EW} is satisfied.
		\end{enumerate}
%
		The action of the realization over simple roots and coroots is encoded in the coefficients\footnote{For 
		the notation, we follow Bourbaki \cite{Bour}: notice that the pairing considered here is the transposed
		of that of \cite{EW}, and \cite{Kac}.}
		$a_{st}:=\langle \alpha_s,\alpha_t^\vee\rangle$, that we store in the 
		\emph{Cartan matrix} $( a_{st} )_{s,t\in S}$.

		Let $R=S(\mathfrak{h}^*)$, with $\mathfrak{h}^*$ in degree 2. The action of $W$ on 
		$\mathfrak{h}^*$ defined by \eqref{eq_realiz2} extends naturally to $R$. 
		We define, for each $s\in S$, the \emph{Demazure operator} $\partial_s : R \rightarrow R$, via:
		\[
						f \mapsto \frac{f-s(f)}{\alpha_s}.
		\]
		Then, in particular, we have\footnote{%
			Notice that $\partial_s(\alpha_t)$ is the same as in \cite{EW}, so this notation avoids 
			transposition issues.}
		$\partial_s(\alpha_t)=\langle \alpha_t,\alpha_s^\vee\rangle$. Notice also that when $f$ is $s$-invariant we have $\partial_s(f)=0$.
		
		In this paper we will only consider \emph{balanced} realizations (see \cite[Definition 3.7]{EW}).
		We also assume \emph{Demazure surjectivity} (see \cite[Assumption 3.9]{EW}), namely we suppose 
		that, for each $s\in S$, the maps:
		\begin{align*}
			&\langle \alpha_s,\cdot \rangle: \mathfrak{h}\rightarrow \kk&
			&\text{and}&
			&\langle \cdot\,,\alpha_s^\vee \rangle: \mathfrak{h}^*\rightarrow \kk
		\end{align*}			
		are surjective. In this case we can (and do) choose some $\delta_s\in\mathfrak{h}^*$ such that 
		$\langle \delta_s,\alpha_s^\vee\rangle=1$.
		Notice that if $2$ is invertible in $\kk$ then this assumption always holds (take for instance
		$\delta_s=\alpha_s/2$). 
		\begin{exa}\label{exa_realiz}
				For any Coxeter group $(W,S)$ one can consider the \emph{geometric representation} 
					$\hh:=\bigoplus_{s\in S}\mathbb{R}\alpha_s^\vee$ with 
					$\alpha_s\in \hh^*$ defined via 
					\[
						\langle \alpha_s,\alpha_t^\vee\rangle=-2\cos(\pi/m_{st}),
					\]
					with $m_{ss}:=1$ as usual, and $\pi/\infty:=0$. 
					Then the simple roots are linearly independent if and only if $W$ is finite.
		\end{exa}
		For more details and examples, see \cite[\S 3.1]{EW}.
		\subsection{Definition of the category}\label{subs_defcatHeck}
		Given a realization $\mathfrak{h}$ over $\kk$ of a Coxeter system $(W,S)$, 
		one constructs the corresponding diagrammatic Hecke category $\D=\D(\mathfrak{h},\kk)$.
		This is a $\kk$-linear monoidal category enriched in graded $R$-bimodules.
		First one defines the Bott-Samelson category $\D_{\mathrm{BS}}$ by generators and relations,
		then one gets $\D$ as the Karoubi envelope of the closure of $\D_{\mathrm{BS}}$ by direct 
		sums and shifts.	
		\begin{enumerate}
			\item The objects of $\DBS$ are generated by tensor product from objects $B_s$ for $s\in S$. So a general object
				corresponds to a Coxeter word: if
				$\uw=s_1\dots s_n$, let $B_{\uw}$ denote the object
				$B_{s_1}\otimes \cdots \otimes B_{s_n}$. Let also $\un$ denote the monoidal unit, corresponding to the 
				empty word. 
			\item Morphisms in $\Hom_{\DBS}(B_{\uw_1},B_{\uw_2})$
				are $\kk$-linear combinations of \emph{Soergel graphs}, which are defined as follows.
				\begin{itemize}
					\item We associate a color to each simple reflection.
					\item A Soergel graph is then a colored, \emph{decorated} planar graph 
						contained in the planar strip $\mathbb{R}\times [0,1]$, with boundary in 
						$\mathbb{R}\times\{0,1\}$.
					\item The bottom (and top) boundary is the arrangement
						of boundary points colored according to the letters of the source word $\uw_1$ 
						(and target word $\uw_2$, respectively).
					\item The edges of the graph are colored in such a way that those connected with the 
						boundary have colors consistent with the boundary points.
					\item The other vertices of the graph are either: 
						\begin{enumerate}
							\item univalent (called \emph{dots}), which are declared of degree 1, or;
							\item trivalent	with three edges of the same color, of degree $-1$, or;
							\item $2m_{st}$-valent with edges of alternating colors corresponding to $s$ and $t$,
							 	if $m_{st}<\infty$, of degree 0.
								By poetic licence, we also call them $(s,t)$-ars.
						\end{enumerate}
						\begin{center}
							\begin{tikzpicture}[baseline=0,scale=.7]
								\def\shift{4cm}
								\draw[gray,dashed] (0,0) circle (1cm); %
									\draw[red] (0,-1)--(0,0); \fill[red] (0,0) circle (2pt);
								\node[anchor=north,inner sep=8pt] at (0,-1) {(a) \textit{Dot}};
								\begin{scope}[xshift=\shift]
									\draw[gray,dashed] (0,0) circle (1cm); %
										\draw[red] (0,1)--(0,0); %
											\draw[red] (-30:1cm)--(0,0)--(-150:1cm);
									\node[anchor=north,inner sep=8pt] at (0,-1) {(b) \textit{Trivalent}};
								\end{scope}
								\begin{scope}[xshift=2*\shift,rotate=-15]
									\draw[gray,dashed] (0,0) circle (1cm); %
										\draw[red] (0,1)--(0,0); %
											\draw[red] (-30:1cm)--(150:1cm);%
												\draw[red](-150:1cm)--(30:1cm);
										\draw[blue,rotate=30] (0,1)--(0,0); %
											\draw[blue,rotate=30] (-30:1cm)--(150:1cm);%
												\draw[blue,rotate=30](-150:1cm)--(30:1cm);
									\foreach \i in {-60,-75,-90}{%
											\fill (\i:.5cm) circle (.5pt);
										}
								\end{scope}
								\node[anchor=north,xshift=1.4*\shift,inner sep=8pt] at (0,-1) {(c) \textit{$(s,t)$-ar}};
							\end{tikzpicture}
						\end{center}
					\item Decorations are boxes labelled by homogeneous elements in 
						$R$ that can appear in
						any region (i.e.\ connected component of the complement of the graph): 
						we will usually omit the boxes and just write the polynomials.
				\end{itemize}
				Then, composition of morphisms is given by gluing diagrams vertically, whereas tensor product
				is given by gluing them horizontally. The identity morphism of the object $B_{\uw}$ is
				the diagram with parallel vertical strands colored according to the word $\uw$.
			\item These diagrams are identified via some relations:
				\begin{description}
					\item[Polynomial relations.] 
%
						The first relations impose additivity and multiplicativity of polynomial boxes, more precisely:
						\begin{align}
							& \text{\emph{Addition}:}  &
							\begin{tikzpicture}[baseline=-0.1cm,scale=.7, transform shape]
								\draw[gray,dashed] (0,0) circle (1cm); \node[draw] at (0,0) {$f$};
							\end{tikzpicture}
							+
							\begin{tikzpicture}[baseline=-0.1cm,scale=.7, transform shape]
								\draw[gray,dashed] (0,0) circle (1cm); \node[draw] at (0,0) {$g$};
							\end{tikzpicture}
							&=
							\begin{tikzpicture}[baseline=-0.1cm,scale=.7, transform shape]
								\draw[gray,dashed] (0,0) circle (1cm); \node[draw] at (0,0) {$f+g$};
							\end{tikzpicture}
							\\
							& \text{\emph{Multiplication}:}  &
							\begin{tikzpicture}[baseline=-0.1cm,scale=.7, transform shape]
								\draw[gray,dashed] (0,0) circle (1cm); \node[draw] at (-0.4,0.4) {$f$};\node[draw] at (0.4,-0.4) {$g$};
							\end{tikzpicture}
							&=
							\begin{tikzpicture}[baseline=-0.1cm,scale=.7, transform shape]
								\draw[gray,dashed] (0,0) circle (1cm); \node[draw] at (0,0) {$fg$};
							\end{tikzpicture}
						\end{align}					
						This gives morphism spaces the structure of $R$-bimodules, by acting on the leftmost or the 
						rightmost region. 
						
						The other polynomial relations are:
						\begin{align}
							& \text{\emph{Barbell} relation:}  &
							\begin{tikzpicture}[baseline=-0.1cm,scale=.7, transform shape] 
								\draw[gray,dashed] (0,0) circle (1cm); \draw[red] (0,.5)--(0,-.5); \fill[red] (0,.5) circle (2pt);\fill[red] (0,-.5) circle (2pt);
							\end{tikzpicture}
							&=
							\begin{tikzpicture}[baseline=-0.1cm,scale=.7, transform shape]
								\draw[gray,dashed] (0,0) circle (1cm); \node[draw] at (0,0) {$\alpha_{\re{s}}$};
							\end{tikzpicture}			\\
							& \text{\emph{Sliding} relation:} & 
							\begin{tikzpicture}[baseline=-0.1cm,scale=.7, transform shape]
								\draw[gray,dashed] (0,0) circle (1cm); \draw[red] (0,1)--(0,-1); 
								\node[draw] at (.5,0) {$f$};
							\end{tikzpicture}			
							&=
							\begin{tikzpicture}[baseline=-0.1cm,scale=.7, transform shape]
								\draw[gray,dashed] (0,0) circle (1cm); \draw[red] (0,1)--(0,-1); 
								\node[draw, inner sep =.08cm] at (-.5,0) {$\re{s}(f)$};
							\end{tikzpicture}			
							+
							\begin{tikzpicture}[baseline=-0.1cm,scale=.7, transform shape]
								\draw[gray,dashed] (0,0) circle (1cm); \draw[red] (0,1)--(0,.5); \fill[red] (0,.5) circle (2pt);
								\draw[red] (0,-1)--(0,-.5); \fill[red] (0,-.5) circle (2pt); 
								\node[draw] at (0,0) {$\partial_{\re{s}}(f)$};
							\end{tikzpicture}\label{eq_relsliding}
							\end{align}
							In particular Relation \eqref{eq_relsliding} implies that
							an $s$-invariant $f$ can slide across strands of the color of $s$ without other changes.
					\item[One color relations.] These are the following:
						\begin{align}
							& \text{\emph{Frobenius associativity}:} &
							\begin{tikzpicture}[baseline=-0.1cm,color=red,scale=sqrt(2)/3,scale=.7, transform shape]
								\draw (-1.5,-1.5)--(-0.6,0)--(0.6,0)--(1.5,-1.5);
								\draw (-1.5,1.5)--(-0.6,0); \draw(0.6,0)--(1.5,1.5);
								\draw[dashed,gray] (0,0) circle ({3/sqrt(2)});
							\end{tikzpicture}
							&=
							\begin{tikzpicture}[baseline=-0.1cm,color=red,scale=sqrt(2)/3,rotate=90,scale=.7, transform shape]
								\draw (-1.5,-1.5)--(-0.6,0)--(0.6,0)--(1.5,-1.5);
								\draw (-1.5,1.5)--(-0.6,0); \draw(0.6,0)--(1.5,1.5);
								\draw[dashed,gray] (0,0) circle ({3/sqrt(2)});
							\end{tikzpicture} \label{eq_frobass} \\
							& \text{\emph{Frobenius unit}:} &
							\begin{tikzpicture}[color=red,baseline=-0.1cm,scale=.7, transform shape]
								\draw[gray,dashed] (0,0) circle (1cm);
								\draw (0,-1) -- (0,1); \draw (0,0)--(.3,0);\fill (0.3,0) circle (2pt);
							\end{tikzpicture}
							&=
							\begin{tikzpicture}[baseline=-0.1cm,scale=.7, transform shape]
								\draw[gray,dashed] (0,0) circle (1cm);						
								\draw[red] (0,-1) -- (0,1);
							\end{tikzpicture} \label{eq_frobunit}
							\\
							&\text{\emph{Needle} relation:} &
							\begin{tikzpicture}[baseline=0,scale=.7, transform shape]
								\draw[gray,dashed] (0,0) circle (1cm);
								\draw[red] (0,-1)--(0,-0.3) arc (-90:270:0.3cm);
							\end{tikzpicture}
							&= 0 \label{eq_needle}
						\end{align}
						\begin{rmk}\label{rmk_selfbiadj}				
							Relations \eqref{eq_frobass} and \eqref{eq_frobunit} could be phrased by saying that
							$B_s$ is a \emph{Frobenius algebra object} in the category $\DBS$, which explains the terminology. 
							This was pointed out in 
							\cite{EliKho}. See \cite[\S 8]{EMTW}.
						\end{rmk}
					\item[Two color relations.] These allow to move dots,
						or trivalent vertices, across $(s,t)$-ars.
						We give the two versions, according to the parity of $m_{st}$:
						\[
							\begin{tikzpicture}[baseline=0,scale=.8,transform shape]
								\draw[dashed,gray] (0,0) circle (1cm);
								\coordinate (a) at (-.3,0); \coordinate (b) at (0.5,0);
								\draw[red] (180:1cm)--(a); \draw[blue] ({180-360/11}:1cm)--(a);
								\draw[red] ({180-2*360/11}:1cm)--(a); \draw[blue] ({180-5*360/11}:1cm)--(b);
								\draw[red] ({180-4*360/11}:1cm)--(a);
								\draw[blue] ({180+360/11}:1cm)--(a);
								\draw[red] ({180+2*360/11}:1cm)--(a); \draw[blue] ({180+5*360/11}:1cm)--(b);
								\draw[red] ({180+4*360/11}:1cm)--(a);
								\draw[blue](a)--(b);
								\node at (-0.05,.5) {...};\node at (-0.05,-.5) {...};
							\end{tikzpicture}
							=
							\begin{tikzpicture}[baseline=0,scale=.8,transform shape]
								\draw[dashed,gray] (0,0) circle (1cm);
								\coordinate (a) at (-.5,0); \coordinate (b) at (0.15,0.35); \coordinate (c) at (0.15,-0.35);
								\draw[red] (180:1cm)--(a); \draw[blue] ({180-360/11}:1cm)--(b);
								\draw[red] ({180-2*360/11}:1cm)--(b); \draw[blue] ({180-5*360/11}:1cm)--(b);
								\draw[red] ({180-4*360/11}:1cm)--(b);
								\draw[blue] ({180+360/11}:1cm)--(c);
								\draw[red] ({180+2*360/11}:1cm)--(c); \draw[blue] ({180+5*360/11}:1cm)--(c);
								\draw[red] ({180+4*360/11}:1cm)--(c);
								\draw[blue](c)..controls (-0.15,0)..(b); \draw[red] (b) -- (c);\draw[red] (b)--(a)--(c);
								\draw[red] (b) ..controls (0.7,0).. (c);
								\node at (.2,.6) {...};\node at (.2,-.6) {...};\node at (.35,0) {...};
							\end{tikzpicture}\quad\text{or}\quad
							\begin{tikzpicture}[baseline=0,scale=.8,transform shape]
								\draw[dashed,gray] (0,0) circle (1cm);
								\coordinate (a) at (-.3,0); \coordinate (b) at (0.5,0);
								\draw[red] (180:1cm)--(a); \draw[blue] ({180-360/11}:1cm)--(a);
								\draw[red] ({180-2*360/11}:1cm)--(a); \draw[red] ({180-5*360/11}:1cm)--(b);
								\draw[blue] ({180-4*360/11}:1cm)--(a);
								\draw[blue] ({180+360/11}:1cm)--(a);
								\draw[red] ({180+2*360/11}:1cm)--(a); \draw[red] ({180+5*360/11}:1cm)--(b);
								\draw[blue] ({180+4*360/11}:1cm)--(a);
								\draw[red](a)--(b);
								\node at (-0.05,.5) {...};\node at (-0.05,-.5) {...};
							\end{tikzpicture}
							=
							\begin{tikzpicture}[baseline=0,scale=.8,transform shape]
								\draw[dashed,gray] (0,0) circle (1cm);
								\coordinate (a) at (-.5,0); \coordinate (b) at (0.15,0.35); \coordinate (c) at (0.15,-0.35);
								\draw[red] (180:1cm)--(a); \draw[blue] ({180-360/11}:1cm)--(b);
								\draw[red] ({180-2*360/11}:1cm)--(b); \draw[red] ({180-5*360/11}:1cm)--(b);
								\draw[blue] ({180-4*360/11}:1cm)--(b);
								\draw[blue] ({180+360/11}:1cm)--(c);
								\draw[red] ({180+2*360/11}:1cm)--(c); \draw[red] ({180+5*360/11}:1cm)--(c);
								\draw[blue] ({180+4*360/11}:1cm)--(c);
								\draw[blue](c)..controls (-0.15,0)..(b); \draw[red] (b) -- (c);\draw[red] (b)--(a)--(c);
								\draw[blue] (b) ..controls (0.7,0).. (c);
								\node at (.2,.6) {...};\node at (.2,-.6) {...};\node at (.35,0) {...};
							\end{tikzpicture}
						\]
						and
						\[
							\begin{tikzpicture}[baseline=0,scale=.8,transform shape]
								\draw[dashed,gray] (0,0) circle (1cm);
								\coordinate (a) at (-.2,0); \coordinate (b) at (0.5,0);
								\draw[red] (180:1cm)--(a); \draw[blue] ({180-360/11}:1cm)--(a);
								\draw[blue] ({180-3*360/11}:1cm)--(a); 
								\draw[red] ({180-4*360/11}:1cm)--(a);
								\draw[blue] ({180+360/11}:1cm)--(a);
								\draw[blue] ({180+3*360/11}:1cm)--(a); \fill[blue] (b) circle (2pt);
								\draw[red] ({180+4*360/11}:1cm)--(a);
								\draw[blue](a)--(b);
								\node at (-0.4,.5) {...};\node at (-0.4,-.5) {...};
							\end{tikzpicture}
							=
							\begin{tikzpicture}[baseline=0,scale=.8,transform shape]
								\draw[dashed,gray] (0,0) circle (1cm);
								\coordinate (a) at (-.2,0); \coordinate (b) at (0.5,0);
								\draw[red] (180:1cm)--(a); \draw[blue] ({180-360/11}:1cm)--(a);
								\draw[blue] ({180-3*360/11}:1cm)--(a); 
								\draw[blue] ({180+360/11}:1cm)--(a);
								\draw[blue] ({180+3*360/11}:1cm)--(a);
								\draw[red] ({180+4*360/11}:1cm) -- (b) -- ({180-4*360/11}:1cm); \draw[red](a)--(b);
								\node at (-0.4,.6) {...};\node at (-0.4,-.6) {...};
								\node[draw,circle, inner sep=.1cm,fill=white] at (a) {$\JW$};
							\end{tikzpicture}
							\quad\text{or}\quad
							\begin{tikzpicture}[baseline=0,scale=.8,transform shape]
								\draw[dashed,gray] (0,0) circle (1cm);
								\coordinate (a) at (-.2,0); \coordinate (b) at (0.5,0);
								\draw[red] (180:1cm)--(a); \draw[blue] ({180-360/11}:1cm)--(a);
								\draw[red] ({180-3*360/11}:1cm)--(a); 
								\draw[blue] ({180-4*360/11}:1cm)--(a);
								\draw[blue] ({180+360/11}:1cm)--(a);
								\draw[red] ({180+3*360/11}:1cm)--(a); \fill[red] (b) circle (2pt);
								\draw[blue] ({180+4*360/11}:1cm)--(a);
								\draw[red](a)--(b);
								\node at (-0.4,.5) {...};\node at (-0.4,-.5) {...};
							\end{tikzpicture}
							=
							\begin{tikzpicture}[baseline=0,scale=.8,transform shape]
								\draw[dashed,gray] (0,0) circle (1cm);
								\coordinate (a) at (-.2,0); \coordinate (b) at (0.5,0);
								\draw[red] (180:1cm)--(a); \draw[blue] ({180-360/11}:1cm)--(a);
								\draw[red] ({180-3*360/11}:1cm)--(a); 
								\draw[blue] ({180+360/11}:1cm)--(a);
								\draw[red] ({180+3*360/11}:1cm)--(a);
								\draw[blue] ({180+4*360/11}:1cm) -- (b) -- ({180-4*360/11}:1cm); \draw[blue](a)--(b);
								\node at (-0.4,.6) {...};\node at (-0.4,-.6) {...};
								\node[draw,circle, inner sep=.1cm,fill=white] at (a) {$\JW$};
							\end{tikzpicture}
						\]
						where the circles labelled $\JW$ are the \emph{Jones-Wenzel morphisms}. 
						These are certain $\kk$-linear combinations of diagrams (with circular boundary
						and $2m_{st}-2$ boundary points around it)
						that can be described in terms of the 2-colored Temperley-Lieb category. 
						We will not need these relations in this paper. For further details, we refer the reader to 
						\cite[\S 5.2]{EW}, \cite{Elias_two} or \cite[\S 8]{EMTW}.
					\item[Three color relations.] For each finite parabolic subgroup $W_I$
						of rank 3, there is a relation ensuring compatibility between the three 
						corresponding $2m$-valent vertices. This is an analog of the 
						\emph{Zamolodchikov tetrahedron equation} for braided monoidal 
						2-categories, generalized to all types (not just $A$). 
						We will not need them either, so for more details see \cite[\S 1.4.3]{EW}.
%
				\end{description}				
				Notice that all the relations are homogeneous, so the morphism spaces are \emph{graded}
				$R$-bimodules.
		\end{enumerate}
		This completes the definition of $\DBS$.
		\begin{dfn}
			The \emph{diagrammatic Hecke category} $\D$ is the Karoubi envelope of the closure of $\DBS$
			by shifts, denoted by $(\cdot)$, and direct sums.
		\end{dfn}
		\subsection{Main properties of the Hecke category}\label{subs_catofHeck}
		The split Grothendieck group $[\D]_{\oplus}$ is naturally a $\Laur$-algebra: the ring structure is induced by the tensor product
		and the action of $v$ corresponds to the shift (more precisely $v[B]:=[B(1)]$).
		Hence we can state the following result (see \cite[\S 6.6]{EW}).
		\begin{thm}\label{thm_soergcat}
			If $\kk$ is a complete local ring, then $\D$ is Krull-Schmidt and 
			there is a unique isomorphism of $\Laur$-algebras, called \emph{character},
					\[
						\mathrm{ch}:[\D]\overset{\sim}{\rightarrow}\HH
					\]
					sending the class of $B_s$ to $b_s$.
		\end{thm}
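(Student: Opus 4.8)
The plan is to derive both statements from two structural inputs of \cite{EW} (see also \cite[\S\S\,11--12]{EMTW}): the \emph{double leaves basis}, which exhibits $\Homb_{\DBS}(B_{\uw},B_{\uv})$ as a free graded right $R$-module with an explicit basis indexed by pairs of subexpressions of $\uw$ and $\uv$ expressing the same element of $W$; and the ensuing \emph{classification of indecomposables}. Since $R=S(\hh^*)$ lives in non-negative even degrees with $R^0=\kk$, the basis shows in particular that $\Homb_{\DBS}(B_{\uw},B_{\uv})$ is bounded below and that its degree-zero part is a free $\kk$-module of finite rank.

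First I would prove that $\D$ is Krull--Schmidt. As $\D$ is a Karoubi envelope, idempotents split, so it suffices to show that the degree-zero endomorphism ring of every object is semiperfect. For a finite direct sum $B$ of shifts of Bott--Samelson objects, the double leaves basis makes $\End^0_{\D}(B)$ a module-finite $\kk$-algebra. Over a complete local ring $\kk$ with maximal ideal $\mathfrak{m}$, any such algebra $A$ is semiperfect: $A/\mathfrak{m}A$ is finite over the residue field, so its idempotents lift to $A/\mathfrak{m}^nA$ for all $n$ and then to $A=\varprojlim A/\mathfrak{m}^nA$ by completeness. Passing to Karoubi summands preserves semiperfectness, so $\End^0_{\D}(B)$ is semiperfect for every $B$, whence $\D$ is Krull--Schmidt. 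A standard induction on length, fed by the double leaves basis, then yields the classification: for each $w\in W$ there is a unique indecomposable $B_w$ occurring in $B_{\uw}$ (for $\uw$ reduced of length $\ell(w)$) but not in any $B_{\uv}$ with $\ell(\uv)<\ell(w)$, every $B_{\uw}$ decomposes as $B_w$ plus shifts of various $B_x$ with $x<w$, and $\{[B_w]\}_{w\in W}$ is a $\Laur$-basis of $[\D]$.

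Next I would build the character as the inverse of a more easily defined map $\varphi\colon\HH\to[\D]$. Using the presentation of $\HH$, I set $\varphi(b_s)=[B_s]$ and check the defining relations: the quadratic relation becomes exactly \eqref{eq_bsbsalgebra}, categorified by the one-colour isomorphism $B_sB_s\cong B_s(1)\oplus B_s(-1)$, and each finite dihedral relation among the $b_s$ is categorified by the rank-two decompositions of Bott--Samelson objects coming from the two-colour relations. Hence $\varphi$ is a well-defined $\Laur$-algebra homomorphism, and by construction $\varphi(b_{s_1}\cdots b_{s_n})=[B_{s_1}\otimes\cdots\otimes B_{s_n}]=[B_{\uw}]$ for every word $\uw=s_1\cdots s_n$.

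Finally I would show $\varphi$ is bijective, so that $\mathrm{ch}:=\varphi^{-1}$ is the desired isomorphism with $\mathrm{ch}([B_s])=b_s$. Both $\HH$ and $[\D]$ are free $\Laur$-modules indexed by $W$: the former on the Kazhdan--Lusztig-type basis $\{b_w\}$, the latter on $\{[B_w]\}$. For $\uw$ reduced one has the matching unitriangular expansions $b_{\uw}=b_w+\sum_{x<w}(\ast)\,b_x$ in $\HH$ and $[B_{\uw}]=[B_w]+\sum_{x<w}(\ast)\,[B_x]$ in $[\D]$; since $\varphi(b_{\uw})=[B_{\uw}]$, a downward induction on length gives $\varphi(b_w)=[B_w]+\sum_{x<w}(\ast)\,[B_x]$, so $\varphi$ is unitriangular with respect to these bases and therefore an isomorphism. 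Uniqueness is immediate, as a $\Laur$-algebra map is determined by the images of the generators $[B_s]$. The main obstacle is not any of these formal deductions but the two imported results --- the double leaves basis and the classification of indecomposables --- whose proofs (via light leaves and a careful study of local intersection forms in \cite{EW}) constitute the technical heart; alternatively, injectivity of $\mathrm{ch}$ can be obtained from the Soergel-type Hom formula $\mathrm{grdim}\,\Homb_{\D}(B_{\uw},B_{\uv})=(\overline{b_{\uw}},b_{\uv})$ for the standard pairing on $\HH$, which the double leaves basis also supplies.
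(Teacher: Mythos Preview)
The paper does not supply its own proof of this theorem: it is stated with the parenthetical ``see \cite[\S\,6.6]{EW}'' and treated as a known input from the literature. Your proposal is therefore not competing with an argument in the paper but rather sketching the proof from the cited reference, and in that respect it is essentially correct and follows the standard Elias--Williamson route (double leaves basis $\Rightarrow$ finite-rank Hom spaces $\Rightarrow$ semiperfect endomorphism rings over a complete local $\kk$ $\Rightarrow$ Krull--Schmidt; classification of indecomposables by $W$; unitriangularity to conclude that the character is an isomorphism).

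One small point worth tightening: you define $\varphi$ by sending $b_s\mapsto[B_s]$ and say you will ``check the defining relations'', but the presentation of $\HH$ in \S\ref{subs_Heckealg} is given in terms of the $\delta_s$, not the $b_s$. To carry out your plan literally you must either rewrite the braid relation in the $b_s$'s (which is messier than the quadratic one) or, more cleanly, observe that $[\D]$ is generated as a $\Laur$-algebra by the $[B_s]$ and instead define $\mathrm{ch}$ directly on $[B_{\uw}]\mapsto b_{s_1}\cdots b_{s_n}$, checking well-definedness via the Krull--Schmidt decomposition. Either way the argument goes through; this is only a matter of presentation, not a gap.
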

		\begin{rmk}\label{rmk_incarn}
			\begin{enumerate}
				\item Under the hypotheses of Theorem \ref{thm_soergcat} one can also classify the
					indecomposable objects: they are parametrized, up to isomorphism and shift, by $W$. 
					Moreover morphism spaces are free as left 
					$R$-modules and one can express their graded rank in terms of $\HH$.
				\item If $\chr(\kk)\neq 2$ and the realization 
					$\mathfrak{h}$ is \emph{reflection faithful} in the sense of \cite[Definition 3.10]{EW},
					then $\D$ is equivalent to the category of Soergel bimodules, see \cite[Theorem\ 6.30]{EW}. 
					This assumption can be dropped by using the variant of the latter category introduced by Abe \cite{Abe}.
				\item\label{item_geom} There are also other incarnations of the Hecke category. For instance, 
					if $W$ is a Weyl group, one can construct a geometric version in terms of parity sheaves over 
					the appropriate (affine) flag variety. See \cite{JMW,RW}.
	\end{enumerate}
		\end{rmk}
		\begin{exa}\label{exa_basicdeco}
			The basic example of a diagrammatic computation in $\D$ is the following (see \cite[\S 5]{EW}). 
			If $s\in S$, then
			\begin{equation}\label{eq_BsBscat}
				B_s\otimes B_s\cong B_s(-1)\oplus B_s(1)			
			\end{equation}
			In fact consider the following maps (where we omit the bottom and top lines):
			\[
			\def\d{1cm}
				\iota_1=	\begin{tikzpicture}[yscale=-1,baseline=-.5*\h]
							\draw[red] (\d,0) -- (1.5*\d,.5*\h) -- (2*\d,0);
							\draw[red] (1.5*\d,.5*\h)--(1.5*\d,\h);
							\node at (1.5*\d,0) {$\delta_s$};
						\end{tikzpicture},
						\qquad
				\pi_1=	\begin{tikzpicture}[baseline=.5*\h]
							\draw[red] (\d,0)--(1.5*\d,.5*\h)--(2*\d,0);
							\draw[red] (1.5*\d,.5*\h)--(1.5*\d,\h);
						\end{tikzpicture},
						\qquad
				\iota_2=	\begin{tikzpicture}[baseline=-.5*\h,yscale=-1]
							\draw[red] (\d,0)--(1.5*\d,.5*\h)--(2*\d,0);
							\draw[red] (1.5*\d,.5*\h)--(1.5*\d,\h);
						\end{tikzpicture},
						\qquad
				\pi_2=	-\begin{tikzpicture}[baseline=.5*\h]
							\draw[red] (\d,0) -- (1.5*\d,.5*\h) -- (2*\d,0);
							\draw[red] (1.5*\d,.5*\h)--(1.5*\d,\h);
							\node[yshift=-1pt] at (1.5*\d,0) {$s(\delta_s)$};
						\end{tikzpicture}.
			\def\d{.5cm}
			\]
			Using the relations, one can check that $\iota_1\pi_1+\iota_2\pi_2=\id_{B_sB_s}$, as well as
			\begin{align*}
				&\pi_1\iota_1=\id_{B_s(-1)},&	 &\pi_2\iota_2=\id_{B_s(1)},&	&\pi_i\iota_j=0\text{ if }i\neq j.
			\end{align*}
			The decomposition \eqref{eq_BsBscat} lifts the equality \eqref{eq_bsbsalgebra} in the Hecke algebra.
		\end{exa}
		For practical reasons, especially in Soergel diagrams, we will use the notation:
		\begin{equation}
			\delta_s':=-s(\delta_s).		
		\end{equation}
		As another example of diagrammatic computation we prove the following generalisation of Relation \eqref{eq_needle}.
		\begin{lem}\label{lem_emptyregion}
			Any empty region (with no decorations) delimited by strands of the same color is zero:
			\begin{equation}
				\begin{tikzpicture}	[baseline=0]
					\draw[dashed,gray] (0,0) circle (1cm);
					\draw[red] (45:.5cm)--(90:.5cm)--(135:.5cm)--(180:.5cm)--(225:.5cm)--(270:.5cm)--(315:.5cm);
					\foreach \i in {45,90,135,180,225,270,315}{
					\draw[red] (\i:.5cm)--(\i:1cm);
					}
					\draw[dashed,red](45:.5cm)--(0:.5cm)--(-45:.5cm);
					\foreach \i in {20,10,0,-10,-20}{
						\fill (\i:.75cm) circle (.5pt);
					}
				\end{tikzpicture}
				=0
			\end{equation}
		\end{lem}
		\begin{proof}
			We proceed by induction on the number $n$ of sides of the region, where a side is delimited by two consecutive trivalent vertices.
			The base case $n=1$ is just Relation \eqref{eq_needle}. 
			Suppose the result true for an $n$-sided region and
			consider a region bounded by $n+1$ red sides. 
			We have:
			\begin{equation}
				\begin{tikzpicture}	[baseline=0]
					\draw[dashed,gray] (0,0) circle (1cm);
					\draw[red] (45:.5cm)--(90:.5cm)--(135:.5cm)--(180:.5cm)--(225:.5cm)--(270:.5cm)--(315:.5cm);
					\foreach \i in {45,90,135,180,225,270,315}{
					\draw[red] (\i:.5cm)--(\i:1cm);
					}
					\draw[dashed,red](45:.5cm)--(0:.5cm)--(-45:.5cm);
					\foreach \i in {20,10,0,-10,-20}{
						\fill (\i:.75cm) circle (.5pt);
					}
					\node[font=\tiny] at (157.5:.6cm) {1};
					\node[font=\tiny] at (112.5:.6cm) {2};
					\node[font=\tiny] at (67.5:.6cm) {3};
					\node[font=\tiny,xshift=-.13cm] at (-157.5:.6cm) {$n\!+\!1$};
					\node[font=\tiny] at (-112.5:.6cm) {$n$};
					\node[font=\tiny,xshift=.05cm] at (-67.5:.7cm) {$n\!-\!1$};					
				\end{tikzpicture}
				=
				\begin{tikzpicture}	[baseline=0]
					\draw[dashed,gray] (0,0) circle (1cm);
					\draw[red] (45:.5cm)--(90:.5cm)--(135:.5cm)--(202.5:.5cm)--(270:.5cm)--(315:.5cm);
					\foreach \i in {45,90,135,270,315}{
					\draw[red] (\i:.5cm)--(\i:1cm);
					}
					\draw[red] (202.5:.5cm)--(202.5:.7cm)--(180:1cm);
					\draw[red] (202.5:.7cm)--(225:1cm);
					\draw[dashed,red](45:.5cm)--(0:.5cm)--(-45:.5cm);
					\foreach \i in {20,10,0,-10,-20}{
						\fill (\i:.75cm) circle (.5pt);
					}
					\node[font=\tiny] at (170:.6cm) {1};
					\node[font=\tiny] at (112.5:.6cm) {2};
					\node[font=\tiny] at (67.5:.6cm) {3};
					\node[font=\tiny] at (-120:.6cm) {$n$};
					\node[font=\tiny,xshift=.05cm] at (-67.5:.7cm) {$n\!-\!1$};					
				\end{tikzpicture}
				=0,
			\end{equation}			
			where, for the first equality, we applied Relation \eqref{eq_frobass} to the two 
			trivalent vertices connected by the strand labelled $n+1$ 
			(the numbers here are only labels, 
			not decorations). For the second equality we applied the induction hypothesis.
		\end{proof}
		\begin{rmk}\label{rmk_onSoergelcat}
			The Hecke category plays an important role in representation theory. 
			We only mention an example related with the origin of Rouquier complexes (see Remark \ref{rmk_orRou} below).
				Let $\mathfrak{g}$ be a complex semisimple Lie algebra and fix a Cartan subalgebra $\mathfrak{h}$. 
					Let $W$ be the associated Weyl group and $S$ the set of simple reflections corresponding to a 
					choice of simple roots. Recall that the \emph{dot action} of $W$ on $\mathfrak{h}^*$ is given 
					by $w\cdot \lambda=w(\lambda+\rho)-\rho$ where $\rho$ is half the sum of positive 
					roots. Consider the category $\mathcal{O}$ of Bernstein-Gel'fand-Gel'fand and its 
					decomposition into blocks $\mathcal{O}=\bigoplus\mathcal{O}_\lambda$. For each 
					$s\in S$ let $\mu\in \mathfrak{h}^*$ be a weight with stabilizer $\{1,s\}$ under 
					the dot action. Then one defines translation functors 
					$T^s:=T_0^{\mu}:\mathcal{O}_0\rightarrow\mathcal{O}_{\mu}$ and 
					$T_s:=T_\mu^0:\mathcal{O}_\mu\rightarrow\mathcal{O}_{0}$ which are left and right adjoint to each other.
					Their composition $\Theta_s=T_sT^s$ is an endofunctor of $\mathcal{O}_0$. 
					Let $\D$ be the Hecke category
					associated with $(W,S)$ and the realization $\mathfrak{h}$. Then one has a functor
					$\D\rightarrow \End(\Proj\mathcal{O}_0)$ sending $B_s$ to $\Theta_s$. This was shown 
					by Soergel \cite{Soe,Soe_HC} in terms of his bimodules, by embedding the subcategory of projectives 
					inside the category of modules over the coinvariant ring $C=R/R\cdot R^W_+$.
		\end{rmk}
		\section{Rouquier complexes}
		In this section we introduce Rouquier complexes in the homotopy category of the 
		Hecke category.
		\subsection{Notation for categories of complexes}
		Given a $\Bbbk$-linear additive category $\mathscr{C}$, let 
		$\C(\CC)$ and $\K(\CC)$ be the category of complexes and the homotopy category associated to $\CC$, respectively.

		Recall that if $\CC$ has a monoidal structure, given by $\otimes$, 
		then this extends to $\C(\CC)$ as follows.
		Let $\Ao,\Bo\in \C(\CC)$, then:
		\begin{equation}\label{eq_grobjdef}
			(\Ao\otimes \Bo)^q:=\bigoplus_{i+j=q} A^i\otimes B^j,		
		\end{equation}
		and the differential map, restricted to $A^i\otimes B^j$ is:
		\begin{equation}\label{eq_signconv}
			d_A\otimes \id + (-1)^i \id \otimes d_B.
		\end{equation}
		One easily defines tensor product of morphisms. The same definition
		gives a monoidal structure also on $\K(\CC)$. Let $\Cb(\CC)$ and $\Kb(\CC)$ denote the corresponding
		bounded full subcategories.
		\subsection{Definition of Rouquier complexes}\label{subs_Roucom}
		Consider the homotopy category $\K(\D)$ with the monoidal structure induced from that of $\D$. 
		The monoidal unit is the object $\un$ concentrated in degree 0.
			
		Consider the \emph{standard} and \emph{costandard} complexes $F_s$ and $F_s^{-1}$ 
		from \eqref{eq_stdcostd}.
		If $\sigma=\sigma_{s}^{\pm 1}\in \Sigma$, then let 
		$F_\sigma$ denote $F_s^{\pm 1}$.
		Then, for any braid word $\uom=\sigma_1 \sigma_2 \dots \sigma_n$, with $\sigma_i\in \Sigma$,
		we put:
		\begin{equation}\label{eq_defrouq}
			F_{\uom}^\bullet:=F_{\sigma_1}\otimes F_{\sigma_2}\otimes \dots \otimes F_{\sigma_n}.		
		\end{equation}
		In the sequel we will often omit the symbol $\otimes$.
		These objects are called
		\emph{Rouquier complexes} and the subcategory $\mathscr{B}_W$ that they form is called \emph{2-braid group}. 
		\begin{rmk}\label{rmk_orRou}
		These complexes were introduced in \cite{Rou_cat} in terms of Soergel bimodules 
		to study natural transformations between the endofunctors induced by braid group actions on categories. 
		As an example, in the setting of Remark \ref{rmk_onSoergelcat}, 
		consider the complexes
		\begin{align*}
			\tilde{F}_s=\dots\rightarrow 0\rightarrow 0\rightarrow &\,\Theta_s\rightarrow \un\rightarrow 0\rightarrow \dots\\
			\tilde{F}_s^{-1}=\dots\rightarrow 0\rightarrow\un \rightarrow &\,\Theta_s\rightarrow 0\rightarrow 0\rightarrow\dots
		\end{align*}
		where $\Theta_s$ is	in degree 0 for both and the only non-zero morphisms are
		given by the counit and unit of the two adjunctions between $T_s$ and $T^s$. 
		One can show that they are mutually inverse 
		self-equivalences of the category $\Kb(\Proj\mathcal{O}_0)=\mathcal{D}^{\mathrm{b}}(\mathcal{O}_0)$, and that 
		the assignment $\sigma_s\mapsto\tilde{F}_s$ defines an action of $B_W$ on this category \cite[Theorem 10.4]{Rou_cat}. Furhtermore
		one can upgrade this to a functor from $\mathscr{B}_W$ to the category of self-equivalences
		of $\mathcal{D}^{\mathrm{b}}(\mathcal{O}_0)$ \cite[Theorem 10.5]{Rou_cat}. Hence morphisms between 
		Rouquier complexes give natural transformations between these endofunctors.
		\end{rmk}
		\begin{rmk}
		In the geometric setting (see Remark \ref{rmk_incarn} \ref{item_geom}), the 
		homotopy category $\Kb(\D)$ was considered by Achar and Riche 
		\cite[\S 3.5]{AchRic_modII} as a modular version of the equivariant mixed derived category on 
		the (affine) flag variety. The standard and costandard complexes $F_s$ and $F_s^{-1}$ correspond to
		the standard and costandard sheaves $\stdmix_{s}$ and $\cosmix_{s}$ in that setting.
		\end{rmk}
		\subsection{Basic properties of Rouquier complexes}
		These properties were first proved by Rouquier \cite{Rou_cat} in the 
		language of Soergel bimodules. A sketch of proof is also given in \cite{Elias}. One can find a 
		diagrammatic argument in \cite{Mal_rou}.
		\begin{pro}\label{pro_braidRou}
			One has the following.
			\begin{enumerate}
			 	\item\label{item_propinvers} Let $s\in S$, then $F_s F_s^{-1}\cong F_s^{-1}F_s\cong \un$.
				\item\label{item_propbraid} Let $s,t\in S$ with $m_{st}<\infty$, then
					\[
						\underbrace{F_s F_tF_sF_t \cdots}_{m_{st} \text{ times}} \cong %
							\underbrace{F_t F_sF_tF_s \cdots}_{m_{st} \text{ times}}
					\]
			\end{enumerate}
			Hence, for each pair of braid words $\uom_1$, $\uom_2$ expressing
			the same element $\omega\in B_W$, there is an isomorphism $F_{\uom_1}^\bullet\cong F_{\uom_2}^\bullet$.
			Furthermore:
			\begin{enumerate}[resume]
				\item\label{item_propcanonic}(Rouquier Canonicity)
					for each $\uom_1$ and $\uom_2$ as above, we have 
					\[
						\Hom(F_{\uom_1}^\bullet,F_{\uom_2}^\bullet)\cong R,
					\]
					and one can chose isomorphisms 
					$\gamma_{\uom_1}^{\uom_2}$ from $F_{\uom_1}^\bullet$ to $F_{\uom_1}^\bullet$ so that the system 
					$\{\gamma_{\uom_1}^{\uom_2}\}_{\uom_1,\uom_2}$ is transitive.
			\end{enumerate}
		\end{pro}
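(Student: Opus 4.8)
The plan is to prove the two structural isomorphisms (i) and (ii) by explicit manipulation in $\Cb(\D)$, to deduce the intermediate isomorphism from the presentation of $B_W$, and to obtain Rouquier canonicity (iii) by transporting the problem to the endomorphisms of $\un$. For (i), I would expand the tensor product $F_s\otimes F_s^{-1}$ using the sign convention \eqref{eq_signconv}. It is concentrated in cohomological degrees $-1,0,1$, with terms $B_s(-1)$, $B_sB_s\oplus\un$ and $B_s(1)$, the differentials being assembled from the two dot maps defining $F_s$ and $F_s^{-1}$. Applying the decomposition $B_sB_s\cong B_s(-1)\oplus B_s(1)$ of \eqref{eq_BsBscat}, I would match the incoming and outgoing differentials of the $B_sB_s$ summand with the structure maps $\iota_1$ and $\pi_2$ of Example \ref{exa_basicdeco}, up to invertible scalars. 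Two components of the differential then become isomorphisms, so two passes of Gaussian elimination cancel the two copies of $B_s(-1)$ and the two copies of $B_s(1)$, leaving $\un$ in degree $0$; the computation for $F_s^{-1}F_s$ is symmetric. (Alternatively one exhibits an explicit contracting homotopy.)

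For (ii), I would build an explicit chain map between $F_sF_tF_s\cdots$ and $F_tF_sF_t\cdots$ (each with $m_{st}$ factors) and show it is a homotopy equivalence. Both complexes carry a single copy of the full Bott-Samelson object $B_sB_tB_s\cdots$ in cohomological degree $0$ and a single copy of $\un(m_{st})$ in top degree, with intermediate terms indexed by proper subexpressions. The degree-$0$ component of the map is the $2m_{st}$-valent vertex exchanging the two Bott-Samelson generators, and the remaining components are essentially forced by commutation with the differentials and can be written using dots and trivalent vertices. \textbf{The main obstacle} is proving that this map is invertible up to homotopy: this is an intricate diagrammatic computation, where the two-color relations are used and whose length grows with $m_{st}$. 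I would organize it either as a direct verification that the two round-trip composites differ from the identity by the boundary of an explicit homotopy, or by reducing both complexes to a common minimal form by Gaussian elimination and identifying them.

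Granting (i) and (ii), the intermediate claim follows from the standard presentation of the Artin-Tits group $B_W$: two braid words $\uom_1,\uom_2$ expressing the same $\omega$ are joined by finitely many braid moves, covered by (ii), and insertions or deletions of $\sigma_s\sigma_s^{-1}$, covered by (i). Since $\otimes$ is a bifunctor on $\Kb(\D)$, tensoring any one of these elementary equivalences on the left and right by identity complexes preserves homotopy equivalence, so composing them yields $F_{\uom_1}^\bullet\cong F_{\uom_2}^\bullet$.

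Finally, for (iii), invertibility from (i) makes $-\otimes (F_{\uom_1}^\bullet)^{-1}$ an autoequivalence of $\Kb(\D)$ sending $F_{\uom_1}^\bullet$ to $\un[0]$ and $F_{\uom_2}^\bullet$ to the Rouquier complex of a word expressing $\omega\omega^{-1}=1$, hence to $\un[0]$ by the intermediate claim. Thus $\Hom(F_{\uom_1}^\bullet,F_{\uom_2}^\bullet)\cong\Hom_{\Kb(\D)}(\un,\un)$, and since $\un$ sits in a single cohomological degree there are no nonzero homotopies, so this equals $\End_\D(\un)\cong R$, free of rank one. To make the generators transitive, I would normalize $\gamma_{\uom_1}^{\uom_2}$ by demanding that its component on the extremal shift-of-unit summand be the identity; rank-one-ness forces this normalization to be unique, and because the composite of two normalized maps is again the identity on that summand, uniqueness gives $\gamma_{\uom_2}^{\uom_3}\circ\gamma_{\uom_1}^{\uom_2}=\gamma_{\uom_1}^{\uom_3}$ and $\gamma_{\uom_1}^{\uom_1}=\id$.
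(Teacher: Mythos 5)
The paper does not actually prove this proposition: it records that the properties were first established by Rouquier \cite{Rou_cat}, with a sketch in \cite{Elias} and a diagrammatic argument in \cite{Mal_rou}. Your outline follows the same standard route as those references. Part (i) via two Gaussian eliminations on $F_sF_s^{-1}$, using the decomposition of Example \ref{exa_basicdeco} (the composite of the incoming differential $\id_{B_s}\otimes(\mathrm{dot})$ with the merge trivalent is the identity by the Frobenius unit relation, and dually for the outgoing one), is correct and essentially complete; the deduction of the intermediate claim from (i) and (ii) via braid moves and free cancellations, and the reduction of the Hom computation in (iii) to $\End(\un)\cong R$ by the autoequivalence $-\otimes(F^\bullet_{\uom_1})^{-1}$, are both sound.

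Two genuine gaps remain. First, part (ii) is the heart of the proposition and you do not prove it: saying the higher components of the chain map are ``essentially forced'' and that invertibility is ``an intricate diagrammatic computation'' is a plan, not an argument --- this is precisely the content the paper outsources to \cite{Rou_cat}, \cite{Elias}, \cite{Mal_rou}. Moreover, your fallback of reducing both sides to a common minimal complex is not available in the stated generality: minimal subcomplexes require $\D$ to be Krull--Schmidt, i.e.\ $\kk$ complete local (Theorem \ref{thm_soergcat}), and the paper explicitly notes in \S\,\ref{subs_heckcat_diagrams} that for arbitrary coefficients this notion is not well defined; so only the direct homotopy-equivalence route proves (ii) as stated. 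Second, your normalization in (iii) breaks for mixed words. The ``extremal shift-of-unit summand'' exists only for positive (or negative) words: for a word containing both kinds of letters, the extremal cohomological terms of $F^\bullet_{\uom}$ are Bott--Samelson objects (the top degree takes $\un(1)$ from each positive factor but $B_s$ from each negative one), while the unique unit summand $\un(\varpi(\uom))$ sits in the middle degree $\varpi(\uom)$. In middle degrees you must additionally prove that the diagonal component of a chain map on this summand is a homotopy invariant --- degree-$1$ dot maps into adjacent Bott--Samelson terms make this non-obvious --- before rank-one-ness can force uniqueness of your normalization and hence transitivity. Establishing this coherence is exactly the nontrivial content of Rouquier canonicity, which \cite{Rou_cat} handles by explicitly constructing the maps $\gamma_{\uom_1}^{\uom_2}$ for elementary moves and verifying independence of the chosen sequence of moves.
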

		\begin{rmk}\label{rmk_faithful}
			Thanks to these properties,	the Rouquier complexes $F_{\omega}$'s associated to $\omega\in B_W$ are 
			well defined up to a canonical isomorphism and they form a quotient
			of the braid group.	Rouquier conjectured that this quotient is just $B_W$ itself.
			This \emph{faithfulness} of the 2-braid group was shown in type $A$ by Khovanov and Seidel \cite{KhoSei}, in simply laced 
			finite type by Brav and Thomas \cite{BraTho}, and in all finite types by Jensen \cite{Jen}.
		\end{rmk}		
		The complex \eqref{eq_defrouq}, seen as an object of $\C(\D)$ is called a 
		\emph{standard rerpesentative} for $F_\omega$.
		When the category $\D$ is Krull-Schmidt, one can 
		get rid of a maximal null-homotopic summand of each $F_{\uom}^\bullet$ and
		obtain a summand with no null-homotopic factor, called
		the \emph{minimal subcomplex} for $F_{\omega}$. 
		One can show that this is unique up to isomorphism (and not just homotopy equivalence): see \cite[\S 6.1]{EWHodge}.
		The minimal subcomplexes of Rouquier complexes are hard to find in general and for arbitrary 
		coefficients, this notion is not well defined. 
			
		For $\omega$ positive we will now describe a representative 
		for $F_{\omega}$ which is simpler than $F_{\uom}^\bullet$, and available 
		without restrictions on $\kk$. This can be considered an intermediate step towards a minimal subcomplex.

		First we describe standard representatives more precisely in this case.
		\subsection{Standard representatives}\label{subs_heckcat_diagrams}
		Let $\omega\in B_W$ be a positive braid expressed by 
		$\uom=\sigma_{s_1}\dots \sigma_{s_n}$.
		Let $\uw$ be the Coxeter word $s_1\dots s_n$.
		Let:
		\begin{equation}
			F^\bullet_{\uw}:=F_{s_1}\otimes \dots \otimes F_{s_{n}}
		\end{equation}
		be the corresponding standard representative for $F_\omega$.
		
		For a given subexpression $\boi$ of $\uw$, let $\uw_{\boi}$ denote the 
		subword corresponding to it and let $B_{\boi}:=B_{\uw_{\boi}}$ be 
		the associated Bott-Samelson object.
			Let also $q_{\boi}=\ell(\uw)-\ell(\uw_{\boi})$.
			Iterating \eqref{eq_grobjdef}, the $q$-th graded piece $F_{\uw}^q$ 
			of $\Roudg{\uw}$ is:
			\begin{equation}\label{eq_gradRoudg}
				F_{\uw}^q=\bigoplus_{j_1+\dots+j_n=q}F_{s_1}^{j_1} \otimes \dots \otimes F_{s_n}^{j_n} =\bigoplus_{\substack{ \boi\in\{0,1\}^{\ell(\uw)} \\ q_{\boi} = q }} B_{\boi} (q).			
			\end{equation}

			One can then compute the components of the differential map, according to
			\eqref{eq_signconv}. The only nonzero components $B_{\boi}\rightarrow B_{\boj}$ are those 
			for which $\boj$ is obtained from $\boi$ by turning a 1 to a 0
			(so that $q_{\boj}=q_{\boi}+1$).
			In this case the component is:
			\begin{equation}\label{eq_diffdiagsign}
				(-1)^a	
					\begin{tikzpicture}[x=0.3cm,y=0.4cm,baseline=0.36cm,every node/.style={font=\scriptsize}]
						\draw[gray] (0.5,0)--(9.5,0);\draw[gray] (0.5,2)--(9.5,2);
						\foreach \i in {1,2,4,6,8,9}{%
								\draw[violet] (\i,0)--(\i,2);
							}
						\draw[violet] (5,0)--(5,1);\fill[violet] (5,1) circle (1.5pt);
						\node[font=\normalsize] at (3,1) {\dots}; \node[font=\normalsize] at (7,1) {\dots};
					\end{tikzpicture}
			 \end{equation}
			where $a$ is the number of 0's in $\boi$ preceding the changed symbol.

			Then the complex $F_{\uw}^\bullet$ has the form 
			of a cube of dimension $\ell(\uw)$: the vertices correspond to the 
			Bott-Samelson objects obtained from all possible subexpressions of $\uw$ and the edges are the arrows 
			describing the components of the differential map.
			\begin{exa}\label{exa_cube}
				Let $\re{s},\bl{t}\in S$ and 
				$\uom=\sigma_s\sigma_s\sigma_t$. 
				The complex $\Roudg{\uom}=F_sF_sF_t$ 
				is the following:
				\begin{center}
					\begin{tikzpicture}[x=2.7cm,y=0.9cm,every pic/.style={x=1cm,scale=0.5,font=\tiny,-}]
						\node (s1s2t) at (0,0) {$B_sB_sB_t$};
						\node (s1s2) at (1,2) {$B_sB_s(1)$};
						\node (s1t) at (1,0) {$B_sB_t(1)$};
						\node (s2t) at (1,-2) {$B_sB_t(1)$};
						\node (s1) at (2,2) {$B_s(2)$};
						\node (s2) at (2,0) {$B_s(2)$};
						\node (t) at (2,-2) {$B_t(2)$};
						\node (n1) at (3,0) {$\un(3)$};
						\draw[-latex] (s1s2t) to %
							pic[midway,yshift=.5cm]{topdot={sst}{3}{+}} (s1s2);%
						\draw[-latex] (s1s2t) to%
							pic[midway,yshift=.3cm]{topdot={sst}{2}{+}} (s1t);%
						\draw[-latex] (s1s2t) to%
							pic[midway,yshift=-.5cm]{topdot={sst}{1}{+}} (s2t);%
						\draw[-latex] (s1s2) to%
							pic[midway,yshift=.3cm]{topdot={ss}{2}{+}} (s1);%
						\draw[-latex] (s1s2) to%
							pic[pos=.6,xshift=.8cm]{topdot={ss}{1}{+}} (s2);%
						\draw[-latex] (s1t) to%
							pic[pos=.4,xshift=-.6cm]{topdot={st}{2}{-}} (s1);%
						\draw[-latex] (s1t) to%
							pic[near start,xshift=-.3cm]{topdot={st}{1}{+}} (t);%
						\draw[-latex] (s2t) to%
							pic[pos=.6,xshift=.8cm]{topdot={st}{2}{-}} (s2);%
						\draw[-latex] (s2t) to%
							pic[midway,yshift=-.3cm]{topdot={st}{1}{-}} (t);%
						\draw[-latex] (s1) to%
							pic[midway,yshift=.5cm]{topdot={s}{1}{+}} (n1);%
						\draw[-latex] (s2) to%
							pic[midway,yshift=.3cm]{topdot={s}{1}{-}} (n1);%
						\draw[-latex] (t) to%
							pic[midway,yshift=-.5cm]{topdot={t}{1}{+}} (n1);%
						\node[green,font=\small] at (0,-3){0}; \node[green,font=\small] at (1,-3){1};%
						\node[green,font=\small] at (2,-3){2}; \node[green,font=\small] at (3,-3) {3};
						\node at (1,1) {$\oplus$}; \node at (1,-1) {$\oplus$};
						\node at (2,1) {$\oplus$}; \node at (2,-1) {$\oplus$};
					\end{tikzpicture}		
				\end{center}			
			\end{exa}	 
			Next we describe an isomorphic \emph{twisted} representative, with a different, and more handy, sign convention.
			\subsection{Sign convention}\label{subs_signconv}
			As described by Elias \cite[\S 4.7]{Elias}, one can choose a different sign convention for the differential in $\Roudg{\uom}$, 
			giving an isomorphic complex. One considers the following 
			twisted tensor product of complexes. 
			\begin{dfn}
				Let $\CC$ be an monoidal additive category and $A^\bullet,B^\bullet$ be complexes in $\C(\CC)$. Then
				the twisted tensor product $A^\bullet\dot{\otimes}B^\bullet$ is the complex with:
				\begin{equation}
					(A^\bullet\dot{\otimes} B^\bullet)^q =\bigoplus_{i+j=q} A^i\otimes B^j,
				\end{equation}
				and with differential, restricted to each object $A^i\otimes B^j$, given by:
				\begin{equation}\label{eq_twisted}
					d_A \otimes \id_B + (-1)^{i+1} \id_A\otimes d_B.
				\end{equation}
			\end{dfn}
			The only difference is the sign of the components of the differential. And it is easy to prove that the complex
			$A^\bullet\dot{\otimes}B^\bullet$ is isomorphic to $A^\bullet\otimes B^\bullet$.
			Elias proves \cite[Lemma 4.16]{Elias} that $\dot{\otimes}$ is associative for certain 
			types of complexes, including the standard and costandard 
			complexes $F_s$ and $F_s^{-1}$ (for all $s$). In particular 
			the product
			\begin{equation}
				E^\bullet_{\uw} := F_{s_1}\dot{\otimes}F_{s_2} \dot{\otimes} \dots \dot{\otimes} F_{s_n}
			\end{equation}
			is well defined and gives a complex isomorphic to $\Roudg{\uw}$, that we call the \emph{twisted representative} for $F_{\omega}$. 
			The graded pieces are the same as in $\Roudg{\uw}$:
			\begin{equation}\label{eq_grpiecE}
				E_{\uw}^q = \bigoplus_{\substack{ \boi\in\{0,1\}^{\ell(\uom)} \\ q_{\boi} = q }} B_{\boi} (q),
			\end{equation}
			and the components of the differential
			are still as in \eqref{eq_diffdiagsign} but $a$ 
			is now the number of 1's (instead of the number of 0's) 
			preceding the changed symbol. In other words $a$ is the 
			number of letters in $\uw_{\boi}$ preceding the one being canceled or also the number of
			strands on the left of the dot in \eqref{eq_diffdiagsign}.
			\begin{exa}
				Consider $\uom=\sigma_s\sigma_s\sigma_t$ as in
				Example \ref{exa_cube} the twisted representative
				$E^\bullet_{\uw}=F_s\dot{\otimes} F_s\dot{\otimes} F_t$ 
				is the following:
				\begin{center}
					\begin{tikzpicture}[x=2.7cm,y=0.9cm,every pic/.style={x=1cm,scale=0.5,font=\tiny,-}]
						\node (s1s2t) at (0,0) {$B_sB_sB_t$};
						\node (s1s2) at (1,2) {$B_sB_s(1)$};
						\node (s1t) at (1,0) {$B_sB_t(1)$};
						\node (s2t) at (1,-2) {$B_sB_t(1)$};
						\node (s1) at (2,2) {$B_s(2)$};
						\node (s2) at (2,0) {$B_s(2)$};
						\node (t) at (2,-2) {$B_t(2)$};
						\node (n1) at (3,0) {$\un(3)$};
						\draw[-latex] (s1s2t) to %
							pic[midway,yshift=.5cm]{topdot={sst}{3}{+}} (s1s2);%
						\draw[-latex] (s1s2t) to%
							pic[midway,yshift=.3cm]{topdot={sst}{2}{-}} (s1t);%
						\draw[-latex] (s1s2t) to%
							pic[midway,yshift=-.5cm]{topdot={sst}{1}{+}} (s2t);%
						\draw[-latex] (s1s2) to%
							pic[midway,yshift=.3cm]{topdot={ss}{2}{-}} (s1);%
						\draw[-latex] (s1s2) to%
							pic[pos=.6,xshift=.8cm]{topdot={ss}{1}{+}} (s2);%
						\draw[-latex] (s1t) to%
							pic[pos=.4,xshift=-.6cm]{topdot={st}{2}{-}} (s1);%
						\draw[-latex] (s1t) to%
							pic[near start,xshift=-.3cm]{topdot={st}{1}{+}} (t);%
						\draw[-latex] (s2t) to%
							pic[pos=.6,xshift=.8cm]{topdot={st}{2}{-}} (s2);%
						\draw[-latex] (s2t) to%
							pic[midway,yshift=-.3cm]{topdot={st}{1}{+}} (t);%
						\draw[-latex] (s1) to%
							pic[midway,yshift=.5cm]{topdot={s}{1}{+}} (n1);%
						\draw[-latex] (s2) to%
							pic[midway,yshift=.3cm]{topdot={s}{1}{+}} (n1);%
						\draw[-latex] (t) to%
							pic[midway,yshift=-.5cm]{topdot={t}{1}{+}} (n1);%
						\node[green,font=\small] at (0,-3){0}; \node[green,font=\small] at (1,-3){1};%
						\node[green,font=\small] at (2,-3){2}; \node[green,font=\small] at (3,-3) {3};
						\node at (1,1) {$\oplus$}; \node at (1,-1) {$\oplus$};
						\node at (2,1) {$\oplus$}; \node at (2,-1) {$\oplus$};
					\end{tikzpicture}		
				\end{center}			
			\end{exa}	  			 			
			Finally, we describe 
			a new complex $R^\bullet_{\uw}$ that we will obtain from 
			$E^\bullet_{\uw}$ by Gaussian elimination (see next section).
			\subsection{Reduced representatives}\label{subs_reducedcom}
			If $s\in S$ and $\ux$ is the monotonous word consisting 
			of $n$ repetitions of the letter $s$, we set
			\[
				\worsum{\ux}:=B_s(-n+1).
			\]
			Iterating Example \ref{exa_basicdeco}, one sees that this is a summand inside $B_s^{\otimes n}=B_{\ux}$ (see also \eqref{eq_iteratdeco} below). If $\ux$ is any Coxeter word, first write it as 
			\begin{equation}\label{eq_mondecomp}
				\ux_1\ux_2\dots \ux_k
			\end{equation}
			where each $\ux_i$ is monotonous but $\ux_i\ux_{i+1}$ is not.
			Then set
			\[
				\worsum{\ux}:=\worsum{\ux_1}\worsum{\ux_2}\cdots \worsum{\ux_k}\underset{\oplus}{\subset} B_{\ux}.
			\]
			We also set $\worsum{\emptyset}:=\un$. In other words, $C_{\ux}=B_{\ux^*}(\ell(\ux^*)-\ell(\ux))$, where $\ux^*$ is obtained from $\ux$
			by contracting each monotonous subsequence to a single letter.
			\begin{exa}
				Let $\ux=sssttusuu$, then:
				\begin{align*}
					&\ux_1=sss,& &\ux_2=tt,& &\ux_3=u,& &\ux_4=s,& &\ux_5=uu.
				\end{align*}
				Hence $\ux^*=stusu$ and $\worsum{\ux}=B_s(-2)\otimes B_t(-1) \otimes B_u \otimes B_s \otimes B_u(-1)=B_{\ux^*}(-4)$.
			\end{exa}
			Now, for $\ux\preceq\uw$, let 
			$q_{\ux}=\ell(\uw)-\ell(\ux)$. The $q$-th graded piece $R^q_{\uw}$ of $R^\bullet_{\uw}$ is:
			\begin{equation}
				R^q_{\uw}=\bigoplus_{\substack{ \ux\preceq \uw \\ q_{\ux}=q }} \worsum{\ux} (q).			
			\end{equation}
			We have to describe the differential map $d$.
			For any $s\in S$ and $k$ a non-negative integer, let us introduce the maps:
			\[
				d_{s,k}:=
				\begin{cases}
					\def\hdot{.32cm}
					\,\,
					\begin{tikzpicture}
						\pic[red] at (0,0) {dot=\hdot};
					\end{tikzpicture} & \text{if $k=0$,}\\
					\begin{tikzpicture}[baseline=.6*\hdot]
						\draw[red] (0,0)--(0,1.7*\hdot);
						\node[anchor=east] at (0,.85*\hdot) {$\delta_{\re{s}}$};
					\end{tikzpicture}
					\hphantom{\delta_s}
					-
					\hphantom{\delta_s}
					\begin{tikzpicture}[baseline=.6*\hdot]
						\draw[red] (0,0)--(0,1.7*\hdot);
						\node[anchor=west] at (0,.85*\hdot) {$\delta_{\re{s}}$};
					\end{tikzpicture} 
					=\,
					\begin{tikzpicture}[baseline=.6*\hdot]
						\draw[red] (0,0)--(0,1.7*\hdot);
						\node[anchor=east] at (0,.85*\hdot) {$\alpha_{\re{s}}$};
					\end{tikzpicture}
					\hphantom{\delta_s}- \hphantom{\delta_s}
					\begin{tikzpicture}[baseline=.6*\hdot]
						\draw[red] (0,0)--(0,.6*\hdot);\fill[red] (0,0.6*\hdot) circle (1.5pt);
						\draw[red] (0,1.7*\hdot)--(0,1.1*\hdot);\fill[red] (0,1.1*\hdot) circle (1.5pt);
					\end{tikzpicture} & \text{if $k>0$ is odd,}
					\\[1em]
					\begin{tikzpicture}[baseline=.6*\hdot]
						\draw[red] (0,0)--(0,1.7*\hdot);
						\node[anchor=east] at (0,.85*\hdot) {$\delta_{\re{s}}$};
					\end{tikzpicture}
					\hphantom{\delta_s}-\hphantom{\delta_s}
					\begin{tikzpicture}[baseline=.6*\hdot]
						\draw[red] (0,0)--(0,1.7*\hdot);
						\node[anchor=west] at (0,.85*\hdot) {${\re{s}}(\delta_{\re{s}})$};
					\end{tikzpicture}
					=\,
					\begin{tikzpicture}[baseline=.6*\hdot]
						\draw[red] (0,0)--(0,.6*\hdot);\fill[red] (0,0.6*\hdot) circle (1.5pt);
						\draw[red] (0,1.7*\hdot)--(0,1.1*\hdot);\fill[red] (0,1.1*\hdot) circle (1.5pt);
					\end{tikzpicture} & \text{if $k>0$ is even.}
				\end{cases}  			
			\]
			So, for any $n$, we can view these as morphisms:
			\begin{align}
				& d_{s,0}:B_s(n)\rightarrow \un(n+1),&
				& d_{s,k}:B_s(n)\rightarrow B_s(n+2)\quad\text{(for $k>0$)}.
			\end{align}
			We define $d$ by specifying its components:
			\begin{equation}\label{eq_compreddiff}
				d_{\ux}^{\uz}:\worsum{\ux}(q_{\ux})\rightarrow \worsum{\uz}(q_{\uz}),			
			\end{equation}
			for $\ux,\uz\preceq \uw$ and $\ell(\ux)=\ell(\uz)+1$, so that $q_{\ux}+1=q_{\uz}$. 			
			We declare the map \eqref{eq_compreddiff} to be nonzero only 
			if $\uz\preceq \ux$, which means that $\uz$ is obtained 
			from $\ux$ by eliminating one letter.
			In this case, one can write $\ux$ and $\uz$ in the forms:
			\begin{align}\label{eq_formxx}
				&\ux=\uw_1 \underbrace{sss\dots s}_{k+1} \uw_2,& 
				&\uz=\uw_1 \underbrace{ss\dots s}_{k} \uw_2,
			\end{align}
			where $\uw_1$ and $\uw_2$ are (possibly empty) Coxeter words such that $\uw_1$ does not
			end with $s$ and $\uw_2$ does not start with $s$. 
			So $C_{\ux}=C_{\uw_1}B_s(-k)C_{\uw_2}$ and we set:
			\begin{equation}\label{eq_diffred}
				d_{\ux}^{\uz}:=(-1)^{\ell(\uw_1)} \id_{\worsum{\uw_1}}\otimes d_{s,k} \otimes \id_{\worsum{\uw_2}},			
			\end{equation}
			If $k=0$ and $\uw_1$ ends with same letter as $\uw_2$ starts, then we also compose on top 
			with the corresponding trivalent vertex:
			\[
				\begin{tikzpicture}[y=.3cm,x=.4cm,baseline=.5cm]
					\draw[gray] (-1,0)--(7,0);
					\draw[blue] (2,0)--(3,2)--(4,0);\draw[blue](3,2)--(3,4);
					\draw[violet] (1,0)--(1,4);\draw[violet] (5,0)--(5,4);
					\draw[red] (3,0)--(3,1); \fill[red] (3,1) circle (1.5pt);
					\node at (0,2) {\dots};\node at (6,2) {\dots};
				\end{tikzpicture}			
			\]
			So the target of this morphism is always $C_{\uz}$.
			\begin{exa}
				Let $\uw=\bl{t}\re{ss}\bl{t}$, then $F_{\uw}$ is
				\[
					\begin{tikzcd}[row sep=tiny]
																				& 															& \worsum{tt}(2)\ar[rdd]			& 						&						\\
																				& \worsum{tst}(1)\ar[ru,"d_2"]\ar[rd]\ar[rddd]	& \oplus										&						&						\\
																				& \oplus													& \worsum{ts}(2)\ar[r]\ar[rdd]		& \worsum{t}(3)\ar[rd]	&						\\
						\worsum{tsst}\ar[ruu,"d_1"]\ar[r]\ar[rdd]	& \worsum{tss}(1)\ar[ru]\ar[rddd]				& \oplus 										& \oplus				& \worsum{\emptyset}(4) 	\\
																				& \oplus													& \worsum{st}(2)\ar[r]\ar[ruu]		& \worsum{s}(3)\ar[ru]	& 						\\
																				& \worsum{sst}(1)\ar[ru]\ar[rd]					& \oplus										& 						&						\\
																				& 															& \worsum{ss}(2)\ar[ruu]			& 						&
					\end{tikzcd}
				\]
				We describe the arrows $d_1$ and $d_2$, as an example, 
				and we leave the others to the reader.
				We have $\worsum{tsst}=B_tB_sB_t(-1)$ and $\worsum{tst}(1)=B_tB_sB_t(1)$. We are canceling the letter $s$
				and passing from two occurrences to one. Here $\uz_1=t$ so the sign is negative.
				Then the morphism $d_1$ is:
				\[	
					-\begin{tikzpicture}[x=.5cm,baseline=.4cm]
						\draw[gray] (0.5,0)--(3.5,0); \draw[gray] (.5,1)--(3.5,1);
						\draw[blue] (1,0)--(1,1); \draw[blue] (3,0)--(3,1);
						\draw[red] (2,0)--(2,1);
						\node at (1.5,0.5) {$\delta_{\re{s}}$};
					\end{tikzpicture}
					-
					\begin{tikzpicture}[x=.5cm,baseline=.4cm]
						\draw[gray] (0.5,0)--(3.5,0); \draw[gray] (.5,1)--(3.5,1);
						\draw[blue] (1,0)--(1,1); \draw[blue] (3,0)--(3,1);
						\draw[red] (2,0)--(2,1);
						\node at (2.5,0.5) {$\delta_{\re{s}}$};
					\end{tikzpicture}
				\]
				Next, we have $\worsum{tt}(2)=B_t(1)$ and the morphism $d_2$ is eliminating the only 
				$s$ from $\worsum{tst}(1)$. 
				Again $\uz_1=t$ so the sign is negative.
				Notice also that the two adjacent letters are both $t$'s, hence $d_2$ is:
				\[	
					-
					\begin{tikzpicture}[x=.4cm,y=.5cm,baseline=.3cm]
						\draw[gray] (0.5,0)--(3.5,0); \draw[gray] (0.5,2)--(3.5,2);
						\draw[blue] (1,0)--(2,1)--(2,2); \draw[blue] (3,0)--(2,1);
						\draw[red] (2,0)--(2,0.5);\fill[red] (2,0.5) circle (1.5pt);
					\end{tikzpicture}
				\]
			\end{exa}
%
%
			Here is the statement of the main result of this paper.
			\begin{thm}\label{thm_reduce}
				Let $\uw$ be a Coxeter word. Then $R^\bullet_{\uw}$ is a 
				complex and a summand of $E^\bullet_{\uw}$ such that the inclusion and projection maps are
				inverse homotopy equivalences. In particular $\Roudg{\uw}\cong E^\bullet_{\uw} \simeq R^\bullet_{\uw}$.
			\end{thm}
			\begin{exa}
				If $\uw=\underline{s}^n$, the theorem gives the minimal complex for $F_s^n$. See
				\cite[Exercise 19.29]{EMTW}.
			\end{exa}
			\begin{rmk}
				If $\uw'\preceq\uw$, then clearly $\Roudg{\uw'}$ is, up to shift, a subcomplex of $\Roudg{\uw}$. The same still holds for
				$R^\bullet_{\uw'}$ and $R^\bullet_{\uw}$. In fact the definition of $F_{\uw}$ only depends on the poset of 
				$\{\ux\preceq\uw\}$ of the subwords of $\uw$,
				and the differential preserves the order ($C_{\ux}$ has an arrow towards $C_{\ux'}$ only if $\ux'\preceq\ux$).
				Then it is sufficient to notice that $\{\ux\preceq\uw'\}$ is a subposet of $\{\ux\preceq\uw\}$.			
			\end{rmk}
			\begin{rmk}
				One can also define reduced representatives for \emph{negative} Rouquier complexes (i.e., corresponding to braids in the monoid generated by $\Sigma^-$. 
				The description of standard and reduced representatives 
				is entirely symmetric. One only needs to reverse the arrows, flip the diagrams upside down and 
				add a minus sign to the cohomological degrees. Then an analogue of Theorem \ref{thm_reduce} holds.
			\end{rmk}
			The next two sections are devoted to the proof of Theorem \ref{thm_reduce}. First we introduce our main tool:
			Morse theoretical Gaussian elimination. Then we explain how to use it in our case.
			\section{Morse Theoretical Gaussian Elimination}\label{sec_morsegauss}
			To obtain reduced representatives of Rouquier complexes we will use
			repeated Gaussian elimination imitating the reduction of CW
			complexes via Discrete Morse Theory, in the sense of Forman \cite{For}.
			This section is just a rephrasing of a work of Sk\"oldberg \cite{Sko},
			in terms of Gaussian elimination of complexes over additive categories, in the finite case.
			Some results about simultaneous Gaussian eliminations are also described 
			by Elias \cite{Elias}.
			\subsection{Usual Gaussian elimination}
			Let $\CC$ be an additive category and let $\C(\CC)$ denote the corresponding 
			category of complexes. 
			\begin{dfn}
				A summand of a complex in $\C(\CC)$ is called \emph{Gaussian} if the corresponding inclusion
				and projection maps are inverse homotopy equivalences.
			\end{dfn}
			First, let us recall the ``one-step'' Gaussian elimination for complexes.
			The following was first pointed out by Bar-Natan \cite{BarNat}.
			\begin{lem}\label{lem_Gausselimination}
				Consider a complex $A^\bullet$ in $\C(\CC)$ of the form:
				\begin{equation*}
					\begin{tikzcd}[ampersand replacement=\&,baseline=.15cm]
						\dots\arrow{r} \& C^{q-1} \arrow{r}{ \begin{psmallmatrix}
										e \\ f
				 					 \end{psmallmatrix} }		\& C^q \oplus E \arrow{r}{ \begin{psmallmatrix}
																						a	& b \\
																						c	& d 
																					\end{psmallmatrix} } \& C^{q+1} \oplus E' \arrow{r}{ 	\begin{psmallmatrix}
																																		g	& h \\
																																	\end{psmallmatrix} } 	\& C^{q+2} \arrow{r} \& \dots
					\end{tikzcd}
				\end{equation*}
				and suppose that $d: E\rightarrow E'$ is an isomorphism in $\CC$. 
				Then the following:
				\[
					\begin{tikzcd}[column sep=large]
					 \dots \ar[r] & C^{q-1} \ar[r,"e"] & C^q \ar[r,"a-bd^{-1}c"] & C^{q+1} \ar[r,"g"] & C^{q+2} \ar[r] & \dots
					\end{tikzcd}
				\]
				is a complex and a Gaussian summand of $A^\bullet$.
				\end{lem}
				\begin{proof}
					Consider the projection morphism $\pi$ given by:
					\begin{equation*}
						\begin{tikzcd}[ampersand replacement=\&,baseline=.15cm]
							\dots\arrow{r} \& C^{q-1} \arrow{r}
										\ar[d,equal]		\& C^{q}\oplus E \arrow{r}
																						\arrow{d}{\begin{psmallmatrix}
																									\id, & 0
																								\end{psmallmatrix}}
																												\& C^{q+1}\oplus E' \arrow{r}
																																\arrow{d}{\begin{psmallmatrix}
																																	\id, & -bd^{-1}
																																\end{psmallmatrix}} 	\& C^{q+2}\ar[d,equal] \arrow{r} \& \dots \\
							\dots\arrow{r} \& C^{q-1} \ar[r]		\& C^{q} \ar[r]	\& C^{q+1} \ar[r]	\& C^{q+2} \arrow{r} \& \dots
						\end{tikzcd}			
					\end{equation*}
					and the inclusion morphism $\iota$ described by:
					\[
						\begin{tikzcd}[ampersand replacement=\&,baseline=.15cm]
							\dots \arrow{r} \& C^{q-1} \arrow{r}
					 					 \& C^{q}\oplus E \arrow{r}
																					\& C^{q+1}\oplus E' \arrow{r}
																											\& C^{q+2} \arrow{r} \& \dots \\
							\dots \arrow{r} \& C^{q-1} \ar[r]\ar[u,equal]		\& C^{q} \ar[r]	\arrow{u}{\begin{psmallmatrix}
																																	\id \\ -d^{-1}c
																																\end{psmallmatrix}}	\& C^{q+1} \ar[r]	\arrow{u}{\begin{psmallmatrix}
																																												\id \\ 0
																																											\end{psmallmatrix}} 	\& C^{q+2} \ar[u,equal] \arrow{r} \& \dots 
						\end{tikzcd}			
					\]
					It is easy to check that $\pi$ and $\iota$ are maps of complexes and that $\pi\iota=\id$.
					The complementary summand:
					\[
						\begin{tikzcd}
							\dots \ar[r] & 0 \ar[r] & E \ar[r,"d"] & E' \ar[r] & 0 \ar[r] &\dots
						\end{tikzcd}
					\]
					is contractible, so the idempotent $\iota\pi$ is homotopy equivalent to the identity.
				\end{proof}
				Now we give an equivalent version of this result in terms of based complexes.
				A \emph{based complex} is an object $K^\bullet$ in 
				$\C(\CC)$ with a given decomposition of its graded pieces:
				\[
					K^q = \bigoplus_{\sigma\in I_q} K_\sigma,
				\]
				where the $I_q$'s are disjoint index sets. Let $V=\cup_q I_q$.
				For $\sigma,\tau\in V$ let $d_\sigma^\tau$ be the component 
				$K_\sigma\rightarrow K_\tau$ of the differential map $d$ of $K^\bullet$
				(we set $d_\sigma^\tau=0$ unless 
				$\sigma\in I_q$ and $\tau\in I_{q+1}$, for some $q$).
				\begin{lem}\label{lem_2gauss}
					Let $K^\bullet$ be a based complex with differential map $d$. Suppose that, 
					for some $\alpha,\beta\in V$, the component 
					$d_{\alpha}^\beta$ of the differential 
					is an isomorphism. 
					Let: 
					\[
						\tilde{K}^q:=\bigoplus_{\sigma\in I_q\setminus \{\alpha,\beta\}} K_\sigma, \quad \text{for all $q$}.
					\]
					So $\tilde{K}^q$ and $K^q$ differ only at the two (consecutive) degrees of $\alpha$ and $\beta$.
					Let $\tilde{d}$ be defined componentwise as: 
					\begin{align}\label{eq_newdiff}
						&\tilde{d}_{\sigma}^\tau=
							d_{\sigma}^\tau - d_\alpha^\tau (d_\alpha^\beta)^{-1} d_\sigma^\beta,& 
						& \text{for all } \sigma,\tau \in V\setminus\{\alpha,\beta\}.
					\end{align}
					Then $\tilde{K}^\bullet$, endowed with $\tilde{d}$, is a complex and a Gaussian summand
					of $K^\bullet$.
				\end{lem}		
				\begin{proof}
					It suffices to apply Lemma \ref{lem_Gausselimination}
					with:
					\begin{align*}
						& E=K_\alpha,& &E'=K_\beta& &\text{and}& 
						C^q=\bigoplus_{\sigma\in I_q \setminus \{\alpha,\beta\} } K_\sigma,& & \text{for all $q$}.
					\end{align*}
					Formula \eqref{eq_newdiff} is obtained by decomposing the new differential.
				\end{proof}
				We want describe a procedure to reduce based complexes with several isomorphisms
				that fit together in a certain way. This is analogous of what happens 
				in discrete Morse theory for the reduction of CW complexes, as described by Forman \cite{For}.
				\subsection{Morse matchings}
				Let $G=(V,E)$ be a directed graph, with vertex set $V$ and edge set $E$.
				A \emph{partial matching} on $G$ is a subset $M$ of $E$ such that
				no vertex in $V$ is common to any two arrows in $M$.
				Let $G^M$ be the directed
				graph obtained by reversing the arrows in $M$. Given a partial matching $M$, 
				let also $V_M$ be the set of vertices that are neither 
				the source nor the target of any arrow in $M$. 
			
				We associate to a based complex $K^\bullet$ as above a directed graph $G_{K^\bullet}$, whose
				vertex set is the index set $V:=\cup_q I_q$ and with a directed edge 
				$\sigma\rightarrow\tau$	whenever $d_\sigma^\tau\neq 0$.
				\begin{dfn}\label{dfn_Morsematch}
					A \emph{finite Morse matching} on a based complex $K^\bullet$ is a finite 
					partial matching $M$ on $G_{K^\bullet}$ such that:
					\begin{enumerate}
						\item for all $(\sigma\rightarrow\tau)\in M$, 
							the corresponding component $d_\sigma^\tau$ is an isomorphism;
						\item the directed graph $G_{K^\bullet}^M$ 
							has no directed cycle.
					\end{enumerate}
				\end{dfn}
				The vertices $\sigma\in V_M$, and the corresponding factors $K_\sigma$, are called \emph{critical} with respect to $M$.
				Next we show that $K^\bullet$ is homotopic to a summand supported on the critical factors.
				\subsection{Repeated Gaussian Elimination}
				Let $K^\bullet$ be a based complex and let $M$ be a Morse matching on its 
				associated graph $G_{K^\bullet}=(V,E)$. 
				Let $\tilde{K}^\bullet$ be the complex with graded pieces:
				\[
					\tilde{K}^q = \bigoplus_{\sigma\in I_q \cap V_M} K_\sigma,
				\]
				and differential $\tilde{d}$, defined as follows.
				For $\sigma,\tau\in V_M$, let $\Gamma_{\sigma}^{\tau}$ be the set of 
				\emph{zigzag paths} $\gamma$ from $\sigma$ to $\tau$ the form:	
				\begin{equation}\label{eq_gamma}
					\sigma=\sigma_0\rightarrow\tau_1\leftarrow\sigma_1\rightarrow\tau_2\leftarrow\dots\rightarrow\tau_{k-1}\leftarrow\sigma_{k-1}\rightarrow\tau_{k}=\tau,				
				\end{equation}
				where the arrows are in $E$ and the leftward arrows $(\tau_i\leftarrow \sigma_i)$ are in $M$, 
				for all $i$
				(so that $\gamma$ becomes an actual directed path in $G_{K^\bullet}^M$). 
				Notice that the components of the differential corresponding to the 
				leftward arrows are invertible by definition.
				To a path $\gamma\in \Gamma_{\sigma}^\tau$ as above, we associate the following \emph{zigzag map}:
				\begin{equation}
					m(\gamma)=(-1)^{k} d_{\sigma_{k-1}}^{\tau_k} (d_{\sigma_{k-1}}^{\tau_{k-1}})^{-1}\dots d_{\sigma_1}^{\tau_2}(d_{\sigma_1}^{\tau_1})^{-1} d_{\sigma_0}^{\tau_1}
				\end{equation}
				The differential $\tilde{d}$ is given, componentwise, by:
				\begin{equation}\label{eq_tildediff}
					\tilde{d}_{\sigma}^\tau= \sum_{\gamma\in\Gamma_\sigma^\tau} m(\gamma),	
				\end{equation}
				for $\sigma,\tau\in V_M$.
				Notice that, by condition (ii) in Definition \ref{dfn_Morsematch}, the set $\Gamma_\sigma^\tau$ is finite, so the sum is well defined.
				Furthermore, notice that, along a path of the form \eqref{eq_gamma}, the cohomological 
				degree goes alternatively up and down by 1, because each arrow corresponds to a component of the differential map which has degree 1.
				So the set $\Gamma_\sigma^\tau$ is empty unless
				$\sigma\in I_q$ and $\tau\in I_{q+1}$, for some $q$. 
				Hence $\tilde{d}$ is a degree one map on $\tilde{K}^\bullet$. 
%
				\begin{thm}\label{thm_mtge}
					Endowed with $\tilde{d}$, the object $\tilde{K}^\bullet$ is a complex and a 
					Gaussian summand of $K^\bullet$.
				\end{thm}
				\begin{proof}
					We proceed by induction on the size of $M$. If $M$ is empty then there is nothing to prove.
					Otherwise let $e=(\alpha\rightarrow \beta) \in M$ be any arrow in the 
					matching. The set $M\setminus \{e\}$ is also a Morse matching. Then, by induction, the 
					complex $K^\bullet$ has a Gaussian summand $\overline{K}^\bullet$, whose associated graph $G_{\overline{K}^\bullet}$ has vertex set
					$V_M\cup \{\alpha,\beta\}$, and whose differential $\overline{d}$ is given by:
					\begin{equation}\label{eq_inddiff}
						\overline{d}_\sigma^\tau = \sum_{\gamma\in \overline{\Gamma}_\sigma^\tau} m(\gamma),
					\end{equation}
					where $\overline{\Gamma}_\sigma^\tau$ is the set of zigzag paths corresponding to $G_{\overline{K}^\bullet}$, with the 
					Morse matching $M\setminus \{e\}$.
					Observe that $\overline{\Gamma}_\alpha^\beta$ only contains the path $e=(\alpha\rightarrow \beta)$, otherwise $G_{K^\bullet}^M$ would 
					contain directed cycles and $M$ would not be a Morse matching. Then $\overline{d}_\alpha^\beta=d_\alpha^\beta$, which is an isomorphism.
					We claim that applying Lemma \ref{lem_2gauss} to this component gives the complex $\tilde{K}^\bullet$. 
		
					In fact, the new vertex set is precisely $V_M$ and the components of the new differential map are, according to \eqref{eq_newdiff} and \eqref{eq_inddiff}:
					\begin{equation}\label{eq_indstep}
						\tilde{d}_\sigma^\tau = \overline{d}_\sigma^\tau - \overline{d}_\alpha^\tau (d_\alpha^\beta)^{-1} \overline{d}_\sigma^\beta
						=\sum_{\gamma\in \overline{\Gamma}_\sigma^\tau}m(\gamma) - 
						\sum_{\gamma\in \overline{\Gamma}_\alpha^\tau}\sum_{\gamma'\in \overline{\Gamma}_\sigma^\beta}	m(\gamma)(d_\alpha^\beta)^{-1}m(\gamma').
					\end{equation}
					Observe that, for $\gamma\in \overline{\Gamma}_\alpha^\tau$ and 
					$\gamma'\in \overline{\Gamma}_\sigma^\beta$, we have:
					\[
						-m(\gamma)(d_\alpha^\beta)^{-1} m(\gamma')=m(\gamma''),
					\]
					where $\gamma''$ is the zigzag path obtained by connecting 
					$\gamma$ to $\gamma'$ via $e$. Furthermore all zigzag paths in 
					$\Gamma_\sigma^\tau$ either do not contain $e$, so belong to $\overline{\Gamma}_\sigma^\tau$, or are obtained in this way. Hence
					\eqref{eq_indstep} gives exactly \eqref{eq_tildediff}.
				\end{proof}
			\section{Reducing Rouquier Compelxes}
			Now we apply this technique to our case to obtain the reduced representatives from \S \ref{subs_reducedcom}. 
			We will describe $E^\bullet_{\uw}$ as a based complex and 
			construct a Morse matching to apply the results of the previous section.
			\subsection{Words with linkings}\label{subs_extrem}
			The first step is a convenient decomposition of the original complex.
			We will need the following notions.
			\begin{dfn}
				A \emph{repetition} in a Coxeter word $\uw\in \cword$ is a 
				length 2 subsequence consisting of equal letters.
			\end{dfn}
			\begin{exa}
				The word $\uw=sttsss$ has 3 repetitions, namely the subsequence $tt$ and the two subsequences $ss$ after it.
			\end{exa}
			\begin{dfn}
				A \emph{linking} of a Coxeter word $\uw\in \cword$ is a 
				subset of the set of repetitions.
				We denote the linking by overlining the chosen repetitions.
				The set of linkings of $\uw$ is denoted by $\mathcal{L}_{\uw}$.
			\end{dfn}
			\begin{exa}
				Here are all the possible linkings of $\uw=sttsss$:
				\begin{align*}
					& sttsss, &
					& stts\overline{ss}, &
					& stt\overline{ss}s, &
					& stt\overline{sss}, &
					& s\overline{tt}sss, &
					& s\overline{tt}s\overline{ss}, &
					& s\overline{tt}\overline{ss}s, &
					& s\overline{tt}\overline{sss}. 
				\end{align*}
				When three (or more) equal letters are overlined we 
				mean that all the repetitions that they give are in the linking.
			\end{exa}
%
			\subsection{Decomposition of Bott-Samelson objects with repetitions}\label{subs_decompos}
			Recall the decomposition of Example \ref{exa_basicdeco}. 
			Iterating, we get:
			\begin{equation}\label{eq_iteratdeco}
				B_s^{\otimes (n+1)}=\bigoplus_{k=0}^{n} B_s(n-2k)^{\oplus {\binom{n}{k}}}.			
			\end{equation}
			The inclusion morphisms of the summands are all the possible combinations of the inclusions $\iota_1$ and $\iota_2$ from
			Example \ref{exa_basicdeco}. More precisely the inclusion of one of the
			$\binom{n}{k}$ summands $B_s(n-2k)$ is of the form:
			\begin{equation}\label{eq_diagincl}
				\begin{tikzpicture}[x=.3cm,y=.7cm,baseline=.7cm]
					\draw[red] (0,0)--(0,1);
					\draw[red] (-15,2) ..controls (-15,1).. (0,1) ..controls (15,1).. (15,2);
					\clip (-15,2) ..controls (-15,1).. (0,1) ..controls (15,1).. (15,2);
					\foreach \i in {-13,-9,-7,-5,-1,1,3,7,9,11}{
						\draw[red] (\i,0)--(\i,2);
						}
					\foreach \i in {-8,0,8}{
						\node[anchor=south] at (\i,1) {$\delta_{\re{s}}$};
						}
					\foreach \i in {-11,-3,5,13}{
						\node[anchor=south,red] at (\i,1.5) {\dots};
						}
				\end{tikzpicture}
			\end{equation}
			with $n+1$ strands on top and a choice of exactly $k$ decorations $\delta_s$ in between them.
			The corresponding projection is obtained by appropriately combining projections
			$\pi_1$ and $\pi_2$ from Example \ref{exa_basicdeco}: one reflects the diagram \eqref{eq_diagincl} vertically,
			and replaces empty decorations with $\delta'_s=-s(\delta_s)$ and $\delta_s$ with empty decorations.
			
			We label these summands, and the corresponding inclusions and projections,
			by the linkings of the word $\uw=sss\dots s$. 
			For a linking $\lambda\in\mathcal{L}_{\uw}$ with $n-k$ chosen repetitions,
			we define $C_\lambda$ to be the copy of $B_s(n-2k)$ in \eqref{eq_iteratdeco} such that:
			\begin{enumerate}
				\item the corresponding inclusion has no decoration between the strands corresponding to linked letters, and the decoration
					$\delta_s$ between the strands corresponding to non-linked letters;
				\item the corresponding projection has the decoration $\delta'_s$ between the strands corresponding to
					linked letters, and no decoration between the strands corresponding to non-linked letters.
			\end{enumerate}
			\begin{pro}\label{pro_labeldecom1}
				The decomposition \eqref{eq_iteratdeco} rewrites as:
				\begin{equation}
					B_{\uw}=B_s^{\otimes(n+1)}=\bigoplus_{\lambda\in\mathcal{L}_{\uw}} C_{\lambda}.
				\end{equation}
			\end{pro}
			\begin{proof}
				It suffices to show that we have correctly labelled all the summands in \eqref{eq_iteratdeco}.
				Fix a value of $k$ between 0 and $n$. There are precisely 
				$\binom{n}{k}$ copies of $B_s(n-2k)$ in 
				\eqref{eq_iteratdeco}, each of which is determined by the 
				positions of the
				decorations in the diagram \eqref{eq_diagincl} of its inclusion. 
				The rule (i) above associates with it exactly one linking 
				on the word $\uw$ with $n-k$ repetitions, corresponding 
				to the empty decorations of the diagram.
				The linkings with $n-k$ repetitions are precisely $\binom{n}{n-k} = \binom{n}{k}$, so we have used them all.
				Of course one could instead work with the projection and use the rule (ii).
			\end{proof}
			\begin{exa}
				There are $\binom{3}{2}=3$ copies of 
				$B_s(-1)$ inside $B_s^{\otimes 4}$. Their inclusion	mophisms are:
				\begin{align*}
					&\begin{tikzpicture}[scale=.35,baseline=1cm]
						\draw[red] (0,0)--(0,1)--(-3,4);
							\foreach \i in {0,1,2}{%
								\draw[red] ({-\i},{1+\i})--({3-2*\i},4);
								}
							\foreach \i in {1,2}{%
								\node[anchor=south, inner sep=.2cm] at ({-\i},{1+\i}) {$\delta_{\re{s}}$};
								}
					\end{tikzpicture}&
					&\begin{tikzpicture}[scale=.35,baseline=1cm]
						\draw[red] (0,0)--(0,1)--(-3,4);
							\foreach \i in {0,1,2}{%
								\draw[red] ({-\i},{1+\i})--({3-2*\i},4);
								}
							\foreach \i in {0,2}{%
								\node[anchor=south, inner sep=.2cm] at ({-\i},{1+\i}) {$\delta_{\re{s}}$};
								}
					\end{tikzpicture}&
					&\begin{tikzpicture}[scale=.35,baseline=1cm]
						\draw[red] (0,0)--(0,1)--(-3,4);
							\foreach \i in {0,1,2}{%
								\draw[red] ({-\i},{1+\i})--({3-2*\i},4);
								}
							\foreach \i in {0,1}{%
								\node[anchor=south, inner sep=.2cm] at ({-\i},{1+\i}) {$\delta_{\re{s}}$};
								}
					\end{tikzpicture}
				\end{align*}
				and the corresponding projection morphisms are, respectively:
				\begin{align*}
					&\begin{tikzpicture}[scale=.35,yscale=-1,baseline=0]
						\draw[red] (0,0)--(0,1)--(-3,4);
							\foreach \i in {0,1,2}{%
								\draw[red] ({-\i},{1+\i})--({3-2*\i},4);
								}
							\foreach \i in {0}{%
								\node[anchor=north,inner sep=0.15cm] at ({-\i},{1+\i}) {$\delta'_{\re{s}}$};
								}
					\end{tikzpicture}&
					&\begin{tikzpicture}[scale=.35,yscale=-1,baseline=0]
						\draw[red] (0,0)--(0,1)--(-3,4);
							\foreach \i in {0,1,2}{%
								\draw[red] ({-\i},{1+\i})--({3-2*\i},4);
								}
							\foreach \i in {1}{%
								\node[anchor=north,inner sep=0.15cm] at ({-\i},{1+\i}) {$\delta'_{\re{s}}$};
								}
					\end{tikzpicture}&
					&\begin{tikzpicture}[scale=.35,yscale=-1,baseline=0]
						\draw[red] (0,0)--(0,1)--(-3,4);
							\foreach \i in {0,1,2}{%
								\draw[red] ({-\i},{1+\i})--({3-2*\i},4);
								}
							\foreach \i in {2}{%
								\node[anchor=north,inner sep=0.15cm] at ({-\i},{1+\i}) {$\delta'_{\re{s}}$};
								}
					\end{tikzpicture}
				\end{align*}
				These three summands are labelled, respectively: $C_{ss\overline{ss}}$, $C_{s\overline{ss}s}$ and  $C_{\overline{ss}ss}$.				
			\end{exa}
			For an arbitrary word $\ux$ consider its decomposition \eqref{eq_mondecomp}. It is clear that:
			\begin{equation}\label{eq_multilabel}
				\mathcal{L}_{\ux} = \mathcal{L}_{\ux_1}\times \mathcal{L}_{\ux_2} \times \dots \times \mathcal{L}_{\ux_k}.
			\end{equation}
			Under this identification, for 
			$\lambda=(\lambda_1,\lambda_2,\dots,\lambda_k) \in \mathcal{L}_{\ux}$, 
			we set:
			\begin{equation}\label{eq_multiC}
			C_\lambda:=C_{\lambda_1}C_{\lambda_2}\cdots C_{\lambda_k}.			
			\end{equation}
			\begin{pro}\label{pro_multiwdec}
			Let $\ux$ be a Coxeter word, then:
			\[
				B_{\ux}=\bigoplus_{\lambda\in\mathcal{L}_{\ux}} C_{\lambda}.
			\]
			\end{pro}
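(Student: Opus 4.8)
The plan is to reduce everything to the one-color decomposition already recorded above and then to assemble the general case along maximal monotonous blocks. First I would recall that iterating the decomposition $B_sB_s\cong B_s(-1)\oplus B_s(1)$ of Example \ref{exa_basicdeco} gives, for a monotonous word $\ux=s^N$, a direct sum decomposition of $B_{\ux}$ whose summands are by construction labelled by the monotonous multiwords $\mu$ with $e(\mu)=\ux$; this is precisely how the objects $C_\mu$ were introduced (and the count matches, since the compositions of $N$ and the summands $B_s(N-1-2k)^{\oplus\binom{N-1}{k}}$ are both enumerated by $2^{N-1}$). Thus the monotonous case of the Proposition holds tautologically, and the real content is that the general case factors through these blocks.

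Next I would write an arbitrary Coxeter word as $\ux=\ux^{(1)}\ux^{(2)}\cdots\ux^{(m)}$, where each $\ux^{(i)}$ is a maximal run of a single letter, so that consecutive blocks carry distinct colors. Since $B_{\ux}=B_{\ux^{(1)}}\otimes\cdots\otimes B_{\ux^{(m)}}$ and the tensor product of graded $R$-bimodules is additive in each variable, applying the monotonous case to each factor and distributing yields
\[
	B_{\ux}=\bigoplus_{(\mu^{(1)},\dots,\mu^{(m)})} C_{\mu^{(1)}}\otimes\cdots\otimes C_{\mu^{(m)}},
\]
where $\mu^{(i)}$ ranges over the monotonous multiwords with $e(\mu^{(i)})=\ux^{(i)}$.

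It then remains to identify this indexing set with the multiwords $\mu$ satisfying $e(\mu)=\ux$. I would exhibit a bijection sending such a $\mu$ to the tuple arising from its maximal monotonous decomposition $\mu=\mu_1\cdots\mu_l$ — the very decomposition used to define $C_\mu$ for a general multiword. The one step that needs care is checking that this decomposition is forced to align with the block decomposition of $\ux$: the map $e$ only expands run-lengths and preserves the underlying color pattern, so the maximal same-color runs of $\mu$ are carried bijectively onto those of $e(\mu)=\ux$, whence $l=m$ and $e(\mu_i)=\ux^{(i)}$. Conversely, any tuple $(\mu^{(1)},\dots,\mu^{(m)})$ concatenates to a unique such $\mu$. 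Under this correspondence the definition $C_\mu=C_{\mu_1}\cdots C_{\mu_l}$ identifies each tensor summand above with a single $C_\mu$, and the displayed decomposition becomes $B_{\ux}=\bigoplus_{e(\mu)=\ux}C_\mu$, as claimed. No genuine obstacle arises beyond this bookkeeping of the monotonous blocks, since the analytic work is entirely contained in the already-established one-color case.
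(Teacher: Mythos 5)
Your proof is correct and takes essentially the same approach as the paper, which likewise establishes the monotonous case by iterating the decomposition $B_sB_s\cong B_s(-1)\oplus B_s(1)$ of Example \ref{exa_basicdeco} and then obtains the general case by applying it to each maximal run of repeated letters in $\ux$ and distributing the tensor product. Your explicit check that the maximal same-color runs of a multiword $\mu$ with $e(\mu)=\ux$ align with the blocks of $\ux$ simply spells out the bookkeeping that the paper's one-line argument leaves implicit.
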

			\begin{proof}
				Applying Proposition \ref{pro_labeldecom1} to each $\ux_i$ in the decomposition \eqref{eq_mondecomp}, we get:
				\[
					B_{\ux}=B_{\ux_1}\dots B_{\ux_k}=\bigg( \bigoplus_{\lambda_1\in\mathcal{L}_{\ux_1}} \!C_{\lambda_1}\bigg)\dots \bigg( \bigoplus_{\lambda_k\in\mathcal{L}_{\ux_k}}\! C_{\lambda_k}\bigg)=\bigoplus_{\lambda\in\mathcal{L}_{\ux_1}\!\times\dots\times\mathcal{L}_{\ux_k}}\!\!\!C_{\lambda_1}\dots C_{\lambda_k},
				\]
				and the last term, by \eqref{eq_multilabel} and \eqref{eq_multiC}, is the right-hand side in the statement.
			\end{proof}
			\subsection{Based Rouquier complexes}\label{subs_based}
			Consider now the graded piece $E^q_{\uw}$.
			For a subexpression $\boi$ of $\uw$, let $\uw_{\boi}$ be the subword of $\uw$ corresponding 
			to it. 
			Using \eqref{eq_grpiecE} and Proposition \ref{pro_multiwdec}, we have:
			\begin{equation}\label{eq_basedR}
				E_{\uw}^q=\bigoplus_{\substack{\boi\in \{0,1\}^{\ell(\uw)} \\ q_{\boi}=q}} B_{\boi}(q)=
				\bigoplus_{\substack{\boi\in \{0,1\}^{\ell(\uw)} \\ q_{\boi}=q}}\,\bigoplus_{\lambda\in\mathcal{L}_{\uw_{\boi}}} C_\lambda (q).
			\end{equation}
			We want to encode the linking $\lambda$ on $\uw_{\boi}$ in the subexpression $\boi$ itself.
			\begin{dfn}
				Let $\boi$ be a 01-sequence for the word $\uw$. Color the symbol of $\boi$ according to the corresponding letters in $\uw$.
				\begin{enumerate}
					\item We call \emph{repetition} in $\boi$ a subsequence of the form $10\dots 01$ where the two 1's have the same color and the symbols
						between them are all 0's.
					\item A \emph{linking} of $\boi$ is then a subset of the set of repetitions of $\boi$.
						Two 1's of $\boi$ are said to be \emph{linked} when they form a repetition appearing in the linking.
					\item A 01-sequence with a chosen linking is called a \emph{linked 01-sequence}. 
				\end{enumerate}
				We express a linking on $\boi$ by overlining the linked 1's (as well as the 0's in between).
				Let $V_{\uw}$ denote the set of linked 01-sequences of $\uw$.
			\end{dfn}
			\begin{exa}\label{exa_listsss}
				Let $\uw=sss$. The set $V_{\uw}$ is:
				\begin{equation}\label{eq_listlinksss}
					\{111,1\overline{11},\overline{11}1,\overline{111};
					011,0\overline{11}; 101,\overline{101},
					110,\overline{11}0;001;010;100;000\}.
				\end{equation}
				For instance, the expression $\overline{111}$ denotes the linked 01-sequence where both repetitions of 1 are in the linking.
			\end{exa}
			There is a clear bijection between $V_{\uw}$ and the set of pairs $(\boi,\lambda)$ with $\lambda$ a 
			linking of $\uw_{\boi}$. Under this identification let $K_\sigma:=C_{\lambda}(q_{\boi})$ and $q_\sigma:=q_{\boi}$.
			Hence \eqref{eq_basedR} can be rewritten as:
			\begin{equation}\label{eq_basedR2}
				E_{\uw}^q=\bigoplus_{\substack{\sigma\in V_{\uw} \\ q_\sigma=q}} K_{\sigma}.
			\end{equation}
			\begin{exa}\label{exa_linksss}
				For $\uw=sss$, the complex $E^\bullet_{\uw}$ is the following:
				\begin{equation}
					\begin{tikzcd}[row sep=tiny]
																	& B_{\underline{ss}}(1) \ar[r]\ar[rdd]		& B_s(2)\ar[rdd]	& \\
																	& \oplus									& \oplus			& \\
						B_{\underline{sss}} \ar[ruu]\ar[r]\ar[rdd]	& B_{\underline{ss}}(1) \ar[ruu]\ar[rdd]		& B_s(2)\ar[r]	& \un(3)\\
																	& \oplus									& \oplus			& \\
																	& B_{\underline{ss}}(1) \ar[ruu]\ar[r]		& B_s(2)\ar[ruu]	& \\
					\end{tikzcd}
				\end{equation}
				which, after decomposition becomes:
				\begin{equation}
						\begin{tikzcd}[row sep=tiny]
																								& B_s \oplus B_s(2) \ar[r]\ar[rdd]		& B_s(2) \ar[rdd]	& \\
																								& \oplus								& \oplus				& \\
							B_s(-2)\oplus B_s^{\oplus 2} \oplus B_s(2) \ar[ruu]\ar[r]\ar[rdd]	& B_s \oplus B_s(2) \ar[ruu]\ar[rdd]	& B_s(2) \ar[r]		& \un(3)\\
																								& \oplus								& \oplus				& \\
																								& B_s \oplus B_s(2) \ar[ruu]\ar[r]		& B_s(2) \ar[ruu]	& \\
						\end{tikzcd}
				\end{equation}
				or, using our notation:
				\begin{equation}\label{eq_cubemorse}
						\begin{tikzcd}[row sep=tiny]
																																						& K_{011}\oplus \textcolor{violet}{K_{0\overline{11}}} \ar[r]\ar[rdd]			& K_{001}\ar[rdd]		& \\
																																						& \oplus																		& \oplus					& \\
							K_{111} \oplus \bl{K_{1\overline{11}}}\oplus\re{K_{\overline{11}1}} \oplus \gr{K_{\overline{111}}} \ar[ruu]\ar[r]\ar[rdd]	& \re{K_{101}}\oplus \textcolor{brown}{K_{\overline{101}}} \ar[ruu]\ar[rdd]		& \textcolor{violet}{K_{010}}\ar[r]	& K_{000}\\
																																						& \oplus																		& \oplus					& \\
																																						& \bl{K_{110}}\oplus \gr{K_{\overline{11}0}} \ar[ruu]\ar[r]						& \textcolor{brown}{K_{100}}\ar[ruu]		& \\
						\end{tikzcd}
				\end{equation}
				The colors describe a Morse matching in this complex, 
				which we will define in general later.
%
			\end{exa}
			\subsection{Components of the differential}
			Now we study some properties of the differential when we take 
			the former decomposition into account. In particular, given 
			two linked 01-sequences $\sigma,\tau\in V_{\uw}$, we give 
			a necessary condition on $\sigma$ and $\tau$ 
			for $d_{\sigma}^\tau$ to be nonzero, and
			a sufficient condition for it to be an isomorphism.
			\begin{dfn}\label{dfn_cover}
				Let $\sigma,\tau\in V_{\uw}$. We say that $\sigma $ \emph{covers}
				$\tau$ if:
				\begin{enumerate}
					\item the 01-sequence of $\tau$ is obtained from that of $\sigma$ by turning a 1 into a 0;
					\item the linkings of $\sigma$ and $\tau$ contain the 
						same repetitions on the left and on the right
						of the changed symbol (but they may differ on repetitions involving 
						the eliminated 1 or going across the new 0).
					\end{enumerate}
			\end{dfn}
			\begin{exa}\label{exa_covering}
				Let $\uw=\re{s}\bl{t}\re{ss}\gr{u}\re{s}$ and consider the linked 01-sequence 
				$\sigma=\re{1}\bl{0}\overline{\re{11}\gr{0}\re{1}}$. 
				Here are the linked 01-sequences covered by $\sigma$ (and how they are obtained):
				\begin{align*}
					& \re{0}\bl{0}\overline{\re{11}\gr{0}\re{1}} & &\text{(eliminating the first 1),} \\ 
					& \re{1}\bl{0}\re{0}\overline{\re{1}\gr{0}\re{1}} & &\text{(eliminating the second 1 and its link to the third),}\\
					& \overline{\re{1}\bl{0}\re{01}\gr{0}\re{1}} & &\text{(eliminating the second 1 and adding a link across the new 0),}\\
					& \re{1}\bl{0}\re{10}\gr{0}\re{1} & &\text{(eliminating the third 1 and both its links),}\\
					& \re{1}\bl{0}\overline{\re{10}\gr{0}\re{1}} & & \text{(eliminating the third 1 and adding a link across the new 0),} \\
					& \re{1}\bl{0}\overline{\re{11}}\gr{0}\re{0}& &\text{(eliminating the last 1 and its link).}
				\end{align*}
			\end{exa}
			\begin{pro}\label{pro_covers}
				Let $\sigma,\tau\in V_{\uw}$. Then the component $d_\sigma^\tau$ of the
				differential is nonzero only if $\sigma$ covers $\tau$.
			\end{pro}
			\begin{proof}
				Let $\boi$ be the 01-sequence corresponding to $\sigma$ and
				$\boj$ the one corresponding to $\tau$.
				The component $d_\sigma^\tau$ is the composition:
				\begin{equation}\label{eq_compos}
					K_\sigma\rightarrow B_{\boi}\rightarrow B_{\boj}\rightarrow K_\tau
				\end{equation}
			where the first and last arrows are the inclusions and projections described in \S \ref{subs_decompos}, and the middle
			arrow is the differential described in \S \ref{subs_heckcat_diagrams} and \S \ref{subs_signconv}.
			If condition (i) in Definition \ref{dfn_cover}
			does not hold then the middle arrow is 0. If it holds
			then the composition has the following form:
			\begin{equation}\label{eq_diagrcompos}
				\pm\,\,
				\begin{tikzpicture}[x=.23cm,y=.6cm,baseline=1.35cm]
					\begin{scope}
						\draw[red] (0,0)--(0,1);
						\draw[red] (0,4)--(0,5);
						\draw[red] (-9,2.5) ..controls (-9,1).. (0,1) ..controls (7,1).. (7,2.5)..controls (7,4).. (0,4) ..controls (-9,4).. (-9,2.5);
						\clip (-9,2.5) ..controls (-9,1).. (0,1) ..controls (7,1).. (7,2.5)..controls (7,4).. (0,4) ..controls (-9,4).. (-9,2.5);
						\foreach \i in {-6,-3,1,4}{
							\draw[red] (\i,0)--(\i,4);
							}
						\draw[red](-1,0)--(-1,2.5); \fill[red] (-1,2.5) circle (1.5pt);
						\foreach \i in {-4.5,2.5}{
							\node[red] at (\i,1.5) {\dots};
							}		
						\foreach \i in {-4.5,2.5}{
							\node[red] at (\i,3.5) {\dots};
							}		
						\foreach \i in {-4.5,2.5}{
							\node[red] at (\i,2.5) {\dots};
							}		
						\foreach \i in {-7.3,-2,0,5.3}{
							\node at (\i,1.4) {*};
							}								
						\foreach \i in {-7.3,-1,5.3}{
							\node at (\i,3.4) {*};
							}								
					\end{scope}
					\begin{scope}[xshift=-3.7cm]	
						\draw[green] (0,0)--(0,1);
						\draw[green] (0,4)--(0,5);
						\draw[green] (-5,2.5) ..controls (-5,1).. (0,1) ..controls (5,1).. (5,2.5)..controls (5,4).. (0,4) ..controls (-5,4).. (-5,2.5);
						\clip (-5,2.5) ..controls (-5,1).. (0,1) ..controls (5,1).. (5,2.5)..controls (5,4).. (0,4) ..controls (-5,4).. (-5,2.5);
						\foreach \i in {-2,2}{
							\draw[green] (\i,0)--(\i,4);
							}
						\foreach \i in {0}{
							\node[green] at (\i,1.5) {\dots};
							}						
						\foreach \i in {0}{
							\node[green] at (\i,3.5) {\dots};
							}						
						\foreach \i in {0}{
							\node[green] at (\i,2.5) {\dots};
							}						
						\foreach \i in {-3.3,3.3}{
							\node at (\i,1.4) {*};
							}
						\foreach \i in {-3.3,3.3}{
							\node at (\i,3.4) {*};
							}
					\end{scope}
					\begin{scope}[xshift=3.2cm]	
						\draw[blue] (0,0)--(0,1);
						\draw[blue] (0,4)--(0,5);
						\draw[blue] (-5,2.5) ..controls (-5,1).. (0,1) ..controls (5,1).. (5,2.5)..controls (5,4).. (0,4) ..controls (-5,4).. (-5,2.5);
						\clip (-5,2.5) ..controls (-5,1).. (0,1) ..controls (5,1).. (5,2.5)..controls (5,4).. (0,4) ..controls (-5,4).. (-5,2.5);
						\foreach \i in {-2,2}{
							\draw[blue] (\i,0)--(\i,4);
							}
						\foreach \i in {0}{
							\node[blue] at (\i,1.5) {\dots};
							}						
						\foreach \i in {0}{
							\node[blue] at (\i,3.5) {\dots};
							}						
						\foreach \i in {0}{
							\node[blue] at (\i,2.5) {\dots};
							}						
						\foreach \i in {-3.3,3.3}{
							\node at (\i,1.4) {*};
							}
						\foreach \i in {-3.3,3.3}{
							\node at (\i,3.4) {*};
							}
					\end{scope}
					\draw[gray] (-23.5,0)--(21.5,0);\draw[gray] (-23.5,5)--(21.5,5);
					\draw[gray,dashed] (-23.5,2)--(21.5,2);\draw[gray,dashed] (-23.5,3)--(21.5,3);
					\node at (-22.5,1) {\dots};\node at (-22.5,2.5) {\dots};\node at (20.5,1) {\dots};
					\node at (-22.5,4) {\dots};\node at (20.5,2.5) {\dots};\node at (20.5,4) {\dots};
				\end{tikzpicture}			
			\end{equation}
			Here we have supposed that the changed 1 has neighboring 1's 
			of the same color on both sides: we leave to the reader the 
			picture in the other cases 
			(for example if it has no neighboring 
			1's of the same color on either side then the central part of 
			the diagram is just a dot).
			The bottom part of \eqref{eq_diagrcompos} is the inclusion 
			and the top part is the projection. The stars 
			should be replaced by (empty) decorations according to \S \ref{subs_decompos}.
			Consider each region delimited by strands of the same color and 
			containing two stars (so not the one with the dot). The two vertical strands delimiting such a 
			region correspond to 1's forming a repetition both 
			in $\sigma$ and $\tau$, which does not go across the changed symbol. If condition (ii) in Definition \ref{dfn_cover} does not hold then, 
			for at least one of these regions, we have either:
			\begin{enumerate}
				\item[(a)] the two 1's considered 
					are linked in $\sigma$ but not in $\tau$; or
				\item[(b)] the two 1's considered 
					are linked in $\tau$ but not in $\sigma$.
			\end{enumerate}
			In case (a) both stars in that region are replaced by empty decorations. In case (b)
			the bottom star is replaced by $\delta_s$ and the top one by $\delta_s'$ (for some $s$). 
			As the product $\delta_s\delta_s'$ is 
			$s$-invariant, it can be taken out of the region by Relation \eqref{eq_relsliding}, 
			so in both cases we obtain a diagram with an empty region delimited by strands of a single color. Hence the morphism is 0 by 
			Lemma \ref{lem_emptyregion}.
			\end{proof}
			\begin{dfn}\label{dfn_strcover}
				Let $\sigma,\tau\in V_{\uw}$ with $\sigma$ covering $\tau$. 
				We say that $\sigma$ \emph{strongly covers}
				$\tau$ if the 1 in $\sigma$ which is being changed appears in at least one link and furthermore:
				\begin{enumerate}
					\item if it appears in only one link, then $\tau$ does not contain 
						any link across the new 0;
					\item if it appears in two links, then $\tau$ has a link connecting the two neighboring 1's across the new 0.
				\end{enumerate}
			\end{dfn}
			In Example \ref{exa_covering}, the 01-sequences which are strongly covered by $\sigma$ are the third and the last two of the list.
			\begin{pro}\label{pro_strcover}
				Let $\sigma,\tau\in V_{\uw}$. If $\sigma$ strongly covers $\tau$ then $K_\sigma=K_\tau$
				and $d_\sigma^\tau=(-1)^a\id$, where
				$a$ is the number of 1's preceding the changed symbol.
			\end{pro}
			\begin{proof}
				Let $\sigma$ correspond to $(\boi,\lambda)$ and $\tau$ to $(\boj,\mu)$, as in \S \ref{subs_based}.
				Then $K_\sigma=C_\lambda(q_\sigma)$ and $K_\tau=C_{\mu}(q_\tau)$.
				Notice that $\sigma$ and $\tau$ only differ by one symbol and one 
				link involving it (or going across it). Hence,
				if we break $\lambda$ and $\lambda'$ into their monotonous parts, 
				as in \S \ref{subs_decompos}, 
				they will only differ in one of them:
				\begin{align}
					&\lambda=(\lambda_1,\lambda_2,\dots,\lambda_i,\dots,\lambda_k),&
					&\mu=(\lambda_1,\lambda_2,\dots,\lambda_i',\dots,\lambda_k).
				\end{align}
				We claim that $C_{\lambda_i}=C_{\lambda_i'}(-1)$.
				If $\lambda_i$ is a linking with $k$ links on the monotonous word 
				with $n$ repetitions of a certain letter $s$,
				then $\lambda_i'$ is one with $k-1$ links on a word of 
				length $n-1$, so:
				\begin{align}
					&C_{\lambda_i}=B_s(n-2k)& &C_{\lambda_i'}=B_s(n-1-2(k-1))=B_s(n-2k+1).
				\end{align}
				This proves the claim and, as $q_\sigma=q_\tau-1$, this implies $K_\sigma=K_\tau$.
				
				Now consider the diagram \eqref{eq_diagrcompos}. 
				Notice that the sign is determined only by 
				the middle arrow in \eqref{eq_compos} and it agrees with that of the statement
				by \S \ref{subs_signconv}. The number $a$ is just 
				the number of strands to the left of the dot in the middle strip of \eqref{eq_diagrcompos}.
				As $\sigma$ covers $\tau$, in all the regions containing 
				two stars, one of these stars is an empty decoration and 
				the other one is not. As $\sigma$ strongly covers $\tau$,
				Definition \ref{dfn_strcover}, translated in terms of the arrangement of decorations, implies that we have two possibilities
				in the region containing the dot:
				\begin{enumerate}
					\item only one of the two bottom stars is a $\delta_s$ (for the appropriate $s$)
						and the top star is an empty decoration;
					\item both bottom stars are empty decorations and the top star is a $\delta_s'$.
				\end{enumerate}
				So, after absorbing the dot by Relation \eqref{eq_frobunit}, the diagram becomes:
				\begin{equation}\label{eq_diagsempl}
					(-1)^a 
					\begin{tikzpicture}[x=.23cm,y=.4cm,baseline=.7cm]
						\begin{scope}
							\draw[red] (0,0)--(0,1);
							\draw[red] (0,3)--(0,4);
							\draw[red] (-9,2) ..controls (-9,1).. (0,1) ..controls (7,1).. (7,2)..controls (7,3).. (0,3) ..controls (-9,3).. (-9,2);
							\clip (-9,2) ..controls (-9,1).. (0,1) ..controls (7,1).. (7,2)..controls (7,3).. (0,3) ..controls (-9,3).. (-9,2);
							\foreach \i in {-6,-3,1,4}{
								\draw[red] (\i,0)--(\i,4);
								}
							\foreach \i in {-4.5,2.5}{
								\node[red] at (\i,2) {\dots};
								}		
							\foreach \i in {-7.3,-1,5.3}{
								\node at (\i,1.9) {*};
								}								
						\end{scope}
						\begin{scope}[xshift=-3.4cm]	
							\draw[green] (0,0)--(0,1);
							\draw[green] (0,3)--(0,4);
							\draw[green] (-5,2) ..controls (-5,1).. (0,1) ..controls (5,1).. (5,2)..controls (5,3).. (0,3) ..controls (-5,3).. (-5,2);
							\clip (-5,2) ..controls (-5,1).. (0,1) ..controls (5,1).. (5,2)..controls (5,3).. (0,3) ..controls (-5,3).. (-5,2);
							\foreach \i in {-2,2}{
								\draw[green] (\i,0)--(\i,4);
								}
							\foreach \i in {0}{
								\node[green] at (\i,2) {\dots};
								}						
							\foreach \i in {-3.3,3.3}{
								\node at (\i,1.9) {*};
								}
						\end{scope}
						\begin{scope}[xshift=2.95cm]	
							\draw[blue] (0,0)--(0,1);
							\draw[blue] (0,3)--(0,4);
							\draw[blue] (-5,2) ..controls (-5,1).. (0,1) ..controls (5,1).. (5,2)..controls (5,3).. (0,3) ..controls (-5,3).. (-5,2);
							\clip (-5,2) ..controls (-5,1).. (0,1) ..controls (5,1).. (5,2)..controls (5,3).. (0,3) ..controls (-5,3).. (-5,2);
							\foreach \i in {-2,2}{
								\draw[blue] (\i,0)--(\i,4);
								}
							\foreach \i in {0}{
								\node[blue] at (\i,2) {\dots};
								}						
							\foreach \i in {-3.3,3.3}{
								\node at (\i,1.9) {*};
								}
						\end{scope}
						\draw[gray] (-22.5,0)--(20.5,0);\draw[gray] (-22.5,4)--(20.5,4);
						\node at (-21.5,2) {\dots};\node at (19.5,2) {\dots};
					\end{tikzpicture}			
				\end{equation}
				where each star is either a $\delta$ or a $\delta'$ of the appropriate color. Notice that 
				Relations \eqref{eq_relsliding}, \eqref{eq_frobunit} and \eqref{eq_needle} imply:
				\begin{equation}\label{eq_deltaout}
					\begin{tikzpicture}[baseline=-.1cm]
						\draw[red] (0,-1)--(0,-.5);
						\draw[red] (0,0) circle (.5cm);
						\draw[red] (0,1)--(0,.5);
						\node at (0,0) {$\delta_{\re{s}}$};
					\end{tikzpicture}
					=
					\begin{tikzpicture}[baseline=-.1cm]
						\draw[red] (0,-1)--(0,-.5);
						\draw[red] (0,0) circle (.5cm);
						\draw[red] (0,1)--(0,.5);
						\node at (-1,0) {$\re{s}(\delta_{\re{s}})$};
					\end{tikzpicture}
					+
					\begin{tikzpicture}[baseline=-.1cm]
						\draw[red] (0,-1)--(0,-.5);
						\draw[red] (-120:.5cm) arc (-120:120:.5cm);
						\fill[red] (-120:.5cm) circle (1.5pt);
						\fill[red] (120:.5cm) circle (1.5pt);
						\draw[red] (0,1)--(0,.5);
						\node at (-.2,0) {$\partial_{\re{s}}(\delta_{\re{s}})$};
					\end{tikzpicture}
					=
					\quad
					\begin{tikzpicture}[baseline=-.1cm]
						\draw[red] (0,-1)--(0,1);
					\end{tikzpicture}					
				\end{equation}
				and the same holds if we replace $\delta_s$ with $\delta_s'$.
				Hence diagram \eqref{eq_diagsempl} is just the identity, up to the sign.
			\end{proof}
			\subsection{A Morse Matching}\label{subs_amorse}
			Consider the based complex $\Roudg{\uw}$ with the decomposition \eqref{eq_basedR2}.
			The associated graph $G$ has vertex set $V_{\uw}$, the set of linked 01-sequences for $\uw$.
			We want to define a Morse matching on this based complex. First we introduce the following
			notion.
			\begin{dfn}\label{dfn_Morsesymb}
				Given $\sigma\in V_{\uw}$, we call the \emph{Morse symbol} 
				of $\sigma$, if it exists, the rightmost one which is either:
				\begin{enumerate}
					\item a 0 such that the first 1 on its left has the same color; or
					\item a linked 1.
				\end{enumerate}
			\end{dfn}
			\begin{exa}
				Consider the set \eqref{eq_listlinksss} from Example \ref{exa_listsss}. We rewrite it highlighting the Morse symbols:
				\[
					\{111,1\overline{1\textbf{1}},\overline{1\textbf{1}}1,\overline{11\textbf{1}};
					011,0\overline{1\textbf{1}}; 1\textbf{0}1,\overline{10\textbf{1}},
					11\textbf{0},\overline{11}\textbf{0};001;01\textbf{0};10\textbf{0};000\}.
				\]
				So notice that 111, 011, 001, and 000 do not have symbols satisfying (i) or (ii) in Definition \ref{dfn_Morsesymb}, so they do not have a Morse symbol.
			\end{exa}
			\begin{pro}\label{pro_criticalp}
				A linked 01-sequence $\sigma\in V_{\uw}$, corresponding to $\ux\preceq\uw$, 
				has no Morse symbol if and only if it is the primary 
				01-sequence for $\ux$ with no links.
			\end{pro}
			\begin{proof}
				The linked 01-sequence $\sigma$ has no symbols of the type (ii)
				in Definition \ref{dfn_Morsesymb} if and only if
				it has no link.
				
				On the other hand, if $\sigma$ has a 0 as in (i) of Definition \ref{dfn_Morsesymb}, 
				then it is not primary: consider $\sigma'$ obtained from $\sigma$ by
				turning such a 0 to 1 and turning the first 1 on its left 
				(which has the same color) to 0. Then $\sigma'$ also 
				corresponds to $\ux$ and is smaller in the lexicographic order.
				Conversely, we prove by induction on $\ell(\uw)$
				that the 01-sequence for $\ux$ with no 0 of this type is 
				primary. The basis step is trivial.
				So let $\uw$ be of positive length and $\sigma$ a 01-sequence for $\ux$ with no 0 of this type.
				Let $\sigma_0$ be the primary 01-sequence for $\ux$. We show that $\sigma=\sigma_0$.
				Let $\uw'$ be obtained from $\uw$ by eliminating the 
				last letter, let $\sigma'$ be the 01-sequence of $\uw'$ 
				obtained from $\sigma$ by eliminating the last symbol 
				and let	$\ux'$ be the corresponding word. Let also
				$\sigma_0'$ be the primary 01-sequence for $\ux'$. 
				Notice that $\sigma_0'$ is obtained from
				$\sigma_0$ by eliminating the last symbol (because they are both minimal). 
				By induction we have $\sigma'=\sigma_0'$ so we only need to check equality on the last symbol.
				Consider the last symbol of $\sigma$:
				\begin{enumerate}
					\item If it is 1, then the last letter of $\ux$ is of that color, say red, so also the last 1 of $\sigma_0$ is red.
						If the last symbol of $\sigma_0$ was 0 then the first 1 on its left would be red, 
						but then $\sigma_0$ would not be primary 
						by the preceding part of the argument. So the last symbol of $\sigma_0$ has to be 1 too.
					\item If it is a 0, then its color is not that of the last 
						letter of $\ux$, otherwise the first 1 on its left would 
						be of this same color, contradicting the assumption. 
						Then also the last symbol of $\sigma_0$ is 0, because $\sigma_0$ also corresponds to $\ux$.
				\end{enumerate}
				This concludes the proof.
			\end{proof}
			\begin{dfn}
				Given $\sigma,\tau\in V_{\uw}$, 
				we declare that $\sigma$ is \emph{matched} to $\tau$, if $\sigma$ has a Morse symbol
				which is an overlined 1 and 
				$\tau$ is obtained from $\sigma$ by turning it to 0,
				and eliminating the link connecting it with the 
				first 1 on its left. 
			\end{dfn}
			In Example \ref{exa_covering}, the linked 01-sequence $\sigma$ is matched to the last 01-sequence of the 
			list. 
			\begin{lem}\label{lem_match}
				With $\sigma$ and $\tau$ as in the definition, the considered 0 of $\tau$
				is the Morse symbol of $\tau$, so only $\sigma$ is matched to it.
			\end{lem}
			\begin{proof}
				The linked 01-sequences $\sigma$ and $\tau$ are of the form:
				\begin{align*}
					\sigma 	&=*\dots * \overline{10\dots 0\textbf{1}} *\dots *, \\
					\tau	&=*\dots * 10\dots 0\textbf{0} *\dots *,
				\end{align*}
				where the only symbol being changed from $\sigma$ to $\tau$ is the boldfaced one.
				The boldfaced 0 in $\tau$ satisfies condition (i) in Definition \ref{dfn_Morsesymb}. If, by contradiction, 
				it is not the Morse symbol of $\tau$, 
				then the latter is on the right of it. In particular, 
				there is a symbol in $\tau$ on the right of the boldfaced 0, which satisfies (i) or (ii) of
				Definition \ref{dfn_Morsesymb}. In this case, 
				consider the corresponding symbol in $\sigma$. This also 
				satisfies one of those two conditions, and it 
				is on the right of the boldfaced 1, which is the Morse symbol of $\sigma$ by assumption. This gives a contradiction.
			\end{proof}						 
			Notice also that, if $\sigma$ is matched to $\tau$, then it strongly 
			covers it, so by Proposition \ref{pro_strcover}, the component
			$d_\sigma^\tau$ is an isomorphism.
			This, together with Lemma \ref{lem_match}, implies that the set:
			\[
				M=\{\sigma\rightarrow \tau\mid\text{$\sigma$ is matched to $\tau$}\}
			\]
			is a partial matching in $G$ and it satisfies the first condition in Definition \ref{dfn_Morsematch}.
			\begin{pro}\label{pro_morsematch}
				The partial matching $M$ is a Morse matching.
			\end{pro}
			\begin{proof}
				We only need to prove that $G^M$, the graph obtained by reversing the arrows in $M$, 
				has no directed cycle. 
				For $\sigma\in V_{\uw}$ let $p(\sigma)$ be 
				the difference between the number of 1's and the 
				number of links in $\sigma$.
				If $\sigma$ covers $\tau$, then:
				\begin{enumerate}
					\item $p(\sigma)\ge p(\tau)$; and
					\item $p(\sigma)=p(\tau)$ if and only if $\sigma$ strongly covers $\tau$.
				\end{enumerate}	
				In particular equality holds if $\sigma$ matches to $\tau$.
				Hence $p$ decreases along paths in $G^M$, so it must 
				remain constant on cycles. 
	
				Furthermore, as $M$ is a partial matching, any path cannot have two 
				consecutive arrows coming from $M$ (i.e., obtained by reversing an arrow in $M$).
				Notice that such arrows decrease the cohomological degree, 
				whereas arrows not coming from $M$ increase it. 				
				So any cycle must alternate arrows from $M$ to arrows not from $M$, and have even length.
				
				Suppose $\gamma$ is a cycle.
				If $\gamma$ is non trivial then it has to contain a path of of one of the forms:
				\begin{align}\label{eq_cycle}
					&\sigma_0\rightarrow \tau_1 \rightsquigarrow \sigma_1,&
					&\tau_1\rightsquigarrow \sigma_1\rightarrow\tau_2
				\end{align}
				where the squiggly arrows come from $M$ (they point rightwards in $G^M$, they would point leftwards in $G$).
				We claim that $\sigma_1$ is strictly less than $\sigma_0$ 
				in the lexicographic order and $\tau_2$ is strictly less than $\tau_1$. 
				This implies that $\gamma$ cannot contain such paths and therefore must be trivial.
				
				We treat the first case of \eqref{eq_cycle}, the second being similar.
				For $d_{\sigma_0}^{\tau_1}$ to be nonzero, $\sigma_0$ must cover $\tau_1$.
				Firstly, as $p(\sigma_0)=p(\tau_1)$ and $\sigma_0\rightarrow\tau_1\notin M$, the linked 01-sequence 
				$\tau_1$ must be obtained from $\sigma_0$ by turning to 0 a linked 1, which is not
				the Morse symbol of $\sigma_0$. Secondly, we know that $\sigma_1$ is matched to $\tau_1$, so it is 
				obtained from $\tau_1$ by turning to 1 the Morse symbol of the latter, which is a 0 (and adding an appropriate link). This 0 in $\tau_1$ must be to the right
				of the preceding one (from the 1 of $\sigma_0$). Composing these two operation we see that $\sigma_1$
				is obtained from $\sigma_0$ by moving a 1 to the right. 
				This decreases the lexicographic order.
			\end{proof}
			The complex \eqref{eq_cubemorse} in Example \ref{exa_linksss} 
			is colored according to the Morse matching in that case (the black factors are the critical ones with respect to it).
			\subsection{Proof of Theorem \ref{thm_reduce}}\label{subs_proof}
			We now recollect the results from the previous paragraphs and apply Morse theoretical Gaussian elimination to 
			the based complex $E^\bullet_{\uw}$, with the decomposition 
			from \S \ref{subs_based} and the Morse matching $M$ 
			from \S \ref{subs_amorse}.
			By Proposition \ref{pro_criticalp}, the critical factors with respect of 
			this Morse matching are precisely those labelled by primary 01-sequences with no links.
			Let $V_M$ be the set of these.
			By Theorem \ref{thm_mtge}, the complex $E^\bullet_{\uw}$ has a 
			Gaussian summand $\tilde{K}^\bullet$ with:
			\[
				\tilde{K}^q=\bigoplus_{\substack{\sigma\in V_M \\ q_\sigma=q}} K_\sigma,
			\]
			and differential $\tilde{d}$ given by the sums of zigzag morphisms as in \eqref{eq_tildediff}.
			
			It is clear that, for all $q$, the graded piece 
			$\tilde{K}^q$ is isomorphic to $R^q_{\uw}$: 
			it suffices to associate each subword to its primary subexpression.
			So, to conclude the proof, we need to check that they are 
			isomorphic as complexes. 
			\begin{pro}
				Let $\sigma,\tau\in V_M$ be the primary 01-sequences for
				the subwords $\ux,\uz\preceq\uw$, respectively. 
				Then, under the identification of $\tilde{K}^q$ with $R^q_{\uw}$, for all $q$, we have
				$\tilde{d}_\sigma^\tau =d^{\uz}_{\ux}$.
			\end{pro}
			\begin{proof}
				Let $\ux$ and $\uz$ be of the form \eqref{eq_formxx}.
				To describe $\tilde{d}_\sigma^\tau$ we need to find the zigzag paths from $\sigma$ to $\tau$.
				The cohomological degree of $\tau$ is one more than that of $\sigma$.
				Recall the quantity $p$ from the proof of Proposition 
				\ref{pro_morsematch}. Observe that $p(\tau)=p(\sigma)-1$, because $\sigma$ and $\tau$ 
				have no links and the latter has exactly one symbol 1 less than the former. 
				This implies that the quantity $p$ must
				decrease exactly once, by one, along any zigzag path.
				Let $\gamma$ be such a path of the form:
				\begin{equation}
					\sigma=\sigma_0\rightarrow \tau_1\leftarrow\sigma_1\rightarrow \tau_2 \leftarrow \dots \rightarrow \tau_k =\tau
				\end{equation}
				Notice that $\tau_1$ is obtained from $\sigma_0$ by turning a 1 into a 0, 
				so $p(\tau_1)\le p(\sigma_0)-1$ (because $\sigma_0$ has no links and $\tau_1$ has exactly one symbol 1 less than $\sigma_0$). 
				Then equality must hold because otherwise the quantity $p$ would decrease by more than one.
				Hence $\tau_1$ must 
				have no links. 
				Now, the quantity $p$ must remain constant on the rest of 
				the path.
				This means that all other arrows correspond to strong covering relations. 
				Then $\sigma_1$ must have exactly one link and 
				$\tau_2$ must be obtained from it by 
				turning to 0 the only linked 1 which is not the Morse 
				symbol and eliminating the link.
				So also $\tau_2$ has no link. Repeating the argument, we see that the entire path is determined
				by the first arrow $\sigma_0\rightarrow\tau_1$ and, 
				for all $i$, that $\tau_i$, $\sigma_i$ and $\tau_{i+1}$ are of the following form:
				\begin{align}\label{eq_form01seq}
					&\tau_i=\dots10\dots0\textbf{0}\dots,&
					&\sigma_i=\dots\overline{10\dots0\textbf{1}}\dots,&
					&\tau_{i+1}=\dots00\dots01\dots,
				\end{align}
				where in $\tau_i$ and $\sigma_i$ 
				the Morse symbol is boldfaced.
				In particular, the two 01-sequences $\tau_i$ and $\tau_{i+1}$ correspond to the same subword (we have just moved a 1 to another position, maintaining its color).
				Hence $\tau_1$ corresponds to the same subword as $\tau_k=\tau$, namely $\uz$.
				
				Now we are ready to compute the morphism $m(\gamma)$, starting from the first piece $d_{\sigma_0}^{\tau_1}$. 
				The 01-sequence $\tau_1$ corresponds to $\uz$ and it 
				is obtained from $\sigma=\sigma_0$ by turning a 1 to a 0.
				So the changed symbol corresponds to the letter $s$ in 
				the notation of \eqref{eq_formxx}.
				Consider diagram \eqref{eq_diagrcompos}: the morphism $d_{\sigma_0}^{\tau_1}$
				is of the following form (where $a$ is the number of strands preceding the dot in the middle strip):
			\begin{equation}\label{eq_diagA}
					(-1)^a \,\,
					\begin{tikzpicture}[x=.23cm,y=.6cm,baseline=1.35cm]
						\begin{scope}
							\draw[red] (0,0)--(0,1);
							\draw[red] (0,4)--(0,5);
							\draw[red] (-9,2.5) ..controls (-9,1).. (0,1) ..controls (7,1).. (7,2.5)..controls (7,4).. (0,4) ..controls (-9,4).. (-9,2.5);
							\clip (-9,2.5) ..controls (-9,1).. (0,1) ..controls (7,1).. (7,2.5)..controls (7,4).. (0,4) ..controls (-9,4).. (-9,2.5);
							\foreach \i in {-6,-3,1,4}{
								\draw[red] (\i,0)--(\i,4);
								}
							\draw[red](-1,0)--(-1,2.5); \fill[red] (-1,2.5) circle (1.5pt);
							\foreach \i in {-4.5,2.5}{
								\node[red] at (\i,1.5) {\dots};
								}		
							\foreach \i in {-4.5,2.5}{
								\node[red] at (\i,3.5) {\dots};
								}		
							\foreach \i in {-4.5,2.5}{
								\node[red] at (\i,2.5) {\dots};
								}		
							\foreach \i in {-7.3,-2,0,5.3}{
								\node at (\i,1.5) {$\delta_{\re{s}}$};
								}								
						\end{scope}
						\begin{scope}[xshift=-3.5cm]	
							\draw[green] (0,0)--(0,1);
							\draw[green] (0,4)--(0,5);
							\draw[green] (-5,2.5) ..controls (-5,1).. (0,1) ..controls (5,1).. (5,2.5)..controls (5,4).. (0,4) ..controls (-5,4).. (-5,2.5);
							\clip (-5,2.5) ..controls (-5,1).. (0,1) ..controls (5,1).. (5,2.5)..controls (5,4).. (0,4) ..controls (-5,4).. (-5,2.5);
							\foreach \i in {-2,2}{
								\draw[green] (\i,0)--(\i,4);
								}
							\foreach \i in {0}{
								\node[green] at (\i,1.5) {\dots};
								}						
							\foreach \i in {0}{
								\node[green] at (\i,3.5) {\dots};
								}						
							\foreach \i in {0}{
								\node[green] at (\i,2.5) {\dots};
								}						
							\foreach \i in {-3.3,3.3}{
								\node at (\i,1.5) {$\delta_{\gr{u}}$};
								}
						\end{scope}
						\begin{scope}[xshift=3cm]	
							\draw[blue] (0,0)--(0,1);
							\draw[blue] (0,4)--(0,5);
							\draw[blue] (-5,2.5) ..controls (-5,1).. (0,1) ..controls (5,1).. (5,2.5)..controls (5,4).. (0,4) ..controls (-5,4).. (-5,2.5);
							\clip (-5,2.5) ..controls (-5,1).. (0,1) ..controls (5,1).. (5,2.5)..controls (5,4).. (0,4) ..controls (-5,4).. (-5,2.5);
							\foreach \i in {-2,2}{
								\draw[blue] (\i,0)--(\i,4);
								}
							\foreach \i in {0}{
								\node[blue] at (\i,1.5) {\dots};
								}						
							\foreach \i in {0}{
								\node[blue] at (\i,3.5) {\dots};
								}						
							\foreach \i in {0}{
								\node[blue] at (\i,2.5) {\dots};
								}						
							\foreach \i in {-3.3,3.3}{
								\node at (\i,1.5) {$\delta_{\bl{t}}$};
								}
						\end{scope}
						\draw[gray] (-23,0)--(21,0);\draw[gray] (-23,5)--(21,5);
						\draw[gray,dashed] (-23,2)--(21,2);\draw[gray,dashed] (-23,3)--(21,3);
						\node at (-21.5,1) {\dots};
						\node at (-21.5,2.5) {\dots};
						\node at (-21.5,4) {\dots};
						\node at (19.5,1) {\dots};
						\node at (19.5,2.5) {\dots};
						\node at (19.5,4) {\dots};
					\end{tikzpicture}			
				\end{equation}				
				Or, if the dot is on the first or the last red strand, respectively of the forms:
				\begin{equation}\label{eq_diagB}
					(-1)^a \,\,
					\begin{tikzpicture}[x=.23cm,y=.6cm,baseline=1.35cm]
						\begin{scope}
							\draw[red] (0,0)--(0,1);
							\draw[red] (0,4)--(0,5);
							\draw[red] (-9,2.5) ..controls (-9,1).. (0,1) ..controls (7,1).. (7,2.5)..controls (7,4).. (0,4) ..controls (-6,4).. (-6,2.5);
							\fill[red] (-9,2.5) circle (1.5pt);
							\clip (-9,2.5) ..controls (-9,1).. (0,1) ..controls (7,1).. (7,2.5)..controls (7,4).. (0,4) ..controls (-9,4).. (-9,2.5);
							\draw[red] (-6,1)--(-6,2.5);
							\foreach \i in {-3,4}{
								\draw[red] (\i,1)--(\i,4);
								}		
							\foreach \i in {0.5}{
								\node[red] at (\i,1.5) {\dots};
								}		
							\foreach \i in {0.5}{
								\node[red] at (\i,3.5) {\dots};
								}		
							\foreach \i in {0.5}{
								\node[red] at (\i,2.5) {\dots};
								}		
							\foreach \i in {-7.1,-4.2,5.1}{
								\node at (\i,1.5) {$\delta_{\re{s}}$};
								}								
						\end{scope}
						\begin{scope}[xshift=-3.5cm]	
							\draw[green] (0,0)--(0,1);
							\draw[green] (0,4)--(0,5);
							\draw[green] (-5,2.5) ..controls (-5,1).. (0,1) ..controls (5,1).. (5,2.5)..controls (5,4).. (0,4) ..controls (-5,4).. (-5,2.5);
							\clip (-5,2.5) ..controls (-5,1).. (0,1) ..controls (5,1).. (5,2.5)..controls (5,4).. (0,4) ..controls (-5,4).. (-5,2.5);
							\foreach \i in {-2,2}{
								\draw[green] (\i,0)--(\i,4);
								}
							\foreach \i in {0}{
								\node[green] at (\i,1.5) {\dots};
								}						
							\foreach \i in {0}{
								\node[green] at (\i,3.5) {\dots};
								}						
							\foreach \i in {0}{
								\node[green] at (\i,2.5) {\dots};
								}						
							\foreach \i in {-3.3,3.3}{
								\node at (\i,1.5) {$\delta_{\gr{u}}$};
								}
						\end{scope}
						\begin{scope}[xshift=3cm]	
							\draw[blue] (0,0)--(0,1);
							\draw[blue] (0,4)--(0,5);
							\draw[blue] (-5,2.5) ..controls (-5,1).. (0,1) ..controls (5,1).. (5,2.5)..controls (5,4).. (0,4) ..controls (-5,4).. (-5,2.5);
							\clip (-5,2.5) ..controls (-5,1).. (0,1) ..controls (5,1).. (5,2.5)..controls (5,4).. (0,4) ..controls (-5,4).. (-5,2.5);
							\foreach \i in {-2,2}{
								\draw[blue] (\i,0)--(\i,4);
								}
							\foreach \i in {0}{
								\node[blue] at (\i,1.5) {\dots};
								}						
							\foreach \i in {0}{
								\node[blue] at (\i,3.5) {\dots};
								}						
							\foreach \i in {0}{
								\node[blue] at (\i,2.5) {\dots};
								}						
							\foreach \i in {-3.3,3.3}{
								\node at (\i,1.5) {$\delta_{\bl{t}}$};
								}
						\end{scope}
						\draw[gray] (-23,0)--(21,0);\draw[gray] (-23,5)--(21,5);
						\draw[gray,dashed] (-23,2)--(21,2);\draw[gray,dashed] (-23,3)--(21,3);
						\node at (-21.5,1) {\dots};
						\node at (-21.5,2.5) {\dots};
						\node at (-21.5,4) {\dots};
						\node at (19.5,1) {\dots};
						\node at (19.5,2.5) {\dots};
						\node at (19.5,4) {\dots};
					\end{tikzpicture}			
				\end{equation}				
				\begin{equation}\label{eq_diagC}
					(-1)^a \,\,
					\begin{tikzpicture}[x=.23cm,y=.6cm,baseline=1.35cm]
						\begin{scope}[xscale=-1,xshift=0.5cm]
							\draw[red] (0,0)--(0,1);
							\draw[red] (0,4)--(0,5);
							\draw[red] (-9,2.5) ..controls (-9,1).. (0,1) ..controls (7,1).. (7,2.5)..controls (7,4).. (0,4) ..controls (-6,4).. (-6,2.5);
							\fill[red] (-9,2.5) circle (1.5pt);
							\clip (-9,2.5) ..controls (-9,1).. (0,1) ..controls (7,1).. (7,2.5)..controls (7,4).. (0,4) ..controls (-9,4).. (-9,2.5);
							\draw[red] (-6,1)--(-6,2.5);
							\foreach \i in {-3,4}{
								\draw[red] (\i,1)--(\i,4);
								}		
							\foreach \i in {0.5}{
								\node[red] at (\i,1.5) {\dots};
								}		
							\foreach \i in {0.5}{
								\node[red] at (\i,3.5) {\dots};
								}		
							\foreach \i in {0.5}{
								\node[red] at (\i,2.5) {\dots};
								}		
							\foreach \i in {-7.2,-4.3,5}{
								\node at (\i,1.5) {$\delta_{\re{s}}$};
								}								
						\end{scope}
						\begin{scope}[xshift=-3.5cm]	
							\draw[green] (0,0)--(0,1);
							\draw[green] (0,4)--(0,5);
							\draw[green] (-5,2.5) ..controls (-5,1).. (0,1) ..controls (5,1).. (5,2.5)..controls (5,4).. (0,4) ..controls (-5,4).. (-5,2.5);
							\clip (-5,2.5) ..controls (-5,1).. (0,1) ..controls (5,1).. (5,2.5)..controls (5,4).. (0,4) ..controls (-5,4).. (-5,2.5);
							\foreach \i in {-2,2}{
								\draw[green] (\i,0)--(\i,4);
								}
							\foreach \i in {0}{
								\node[green] at (\i,1.5) {\dots};
								}						
							\foreach \i in {0}{
								\node[green] at (\i,3.5) {\dots};
								}						
							\foreach \i in {0}{
								\node[green] at (\i,2.5) {\dots};
								}						
							\foreach \i in {-3.3,3.3}{
								\node at (\i,1.5) {$\delta_{\gr{u}}$};
								}
						\end{scope}
						\begin{scope}[xshift=3cm]	
							\draw[blue] (0,0)--(0,1);
							\draw[blue] (0,4)--(0,5);
							\draw[blue] (-5,2.5) ..controls (-5,1).. (0,1) ..controls (5,1).. (5,2.5)..controls (5,4).. (0,4) ..controls (-5,4).. (-5,2.5);
							\clip (-5,2.5) ..controls (-5,1).. (0,1) ..controls (5,1).. (5,2.5)..controls (5,4).. (0,4) ..controls (-5,4).. (-5,2.5);
							\foreach \i in {-2,2}{
								\draw[blue] (\i,0)--(\i,4);
								}
							\foreach \i in {0}{
								\node[blue] at (\i,1.5) {\dots};
								}						
							\foreach \i in {0}{
								\node[blue] at (\i,3.5) {\dots};
								}						
							\foreach \i in {0}{
								\node[blue] at (\i,2.5) {\dots};
								}						
							\foreach \i in {-3.3,3.3}{
								\node at (\i,1.5) {$\delta_{\bl{t}}$};
								}
						\end{scope}
						\draw[gray] (-23,0)--(21,0);\draw[gray] (-23,5)--(21,5);
						\draw[gray,dashed] (-23,2)--(21,2);\draw[gray,dashed] (-23,3)--(21,3);
						\node at (-21.5,1) {\dots};
						\node at (-21.5,2.5) {\dots};
						\node at (-21.5,4) {\dots};
						\node at (19.5,1) {\dots};
						\node at (19.5,2.5) {\dots};
						\node at (19.5,4) {\dots};
					\end{tikzpicture}			
				\end{equation}				
				(Again we are assuming that there are neighboring red 
				strands, otherwise we would only have a red dot
				in the central part of the picture).
				Consider the diagrams:
				\begin{align}
					&\begin{tikzpicture}[baseline=-.1cm]
						\draw[red] (0,-1)--(0,-.5);
						\draw[red] (0,0) circle (.5cm);
						\draw[red] (0,1)--(0,.5);
						\node at (0,0) {$\delta_{\re{s}}^2$};
					\end{tikzpicture}=\quad
					\begin{tikzpicture}[baseline=-.1cm]
						\draw[red] (0,-1)--(0,1);
						\node at (1,0) {$\delta_{\re{s}}+\re{s}(\delta_{\re{s}})$};
					\end{tikzpicture},&
					&\begin{tikzpicture}[baseline=-.1cm]
						\draw[red] (0,-1)--(0,1);
						\node at (-.4,0) {$\delta_{\re{s}}$};
					\end{tikzpicture}&
					&\text{and}&
					&\begin{tikzpicture}[baseline=-.1cm]
						\draw[red] (0,-1)--(0,1);
						\node at (.5,0) {$\delta_{\re{s}}$};
					\end{tikzpicture}.
				\end{align}
				Let us call these maps $A$, $B$ and $C$ respectively.
				By retracting the dots and using \eqref{eq_deltaout},
				diagram \eqref{eq_diagA}, \eqref{eq_diagB} and \eqref{eq_diagC} become, up to sign, respectively:
				\begin{align}
					&\id_{C_{\uw_1}} A \id_{C_{\uw_2}},&
					&\id_{C_{\uw_1}} B \id_{C_{\uw_2}},&
					&\id_{C_{\uw_1}} C \id_{C_{\uw_2}}.				
				\end{align}

				We compute the remaining pieces of $m(\gamma)$. For each $i$, consider the 01-sequences \eqref{eq_form01seq}. 
				Let $a_i$ be the number of 1's preceding the Morse symbol in $\tau_i$ and $b_i$ the number of 1's preceding the other linked 1 in $\sigma_i$. Then of course
				$b_i=a_i-1$.
				By Proposition \ref{pro_strcover}, the morphism $d_{\sigma_i}^{\tau_i}$ is $(-1)^{a_i}\id$ 
				and $d_{\sigma_i}^{\tau_{i+1}}$ is $(-1)^{b_i}\id$. Hence:
				\begin{equation}\label{eq_zigzagpiece}
					-d_{\sigma_i}^{\tau_{i+1}}(d_{\sigma_i}^{\tau_i})^{-1}=(-1)^{a_i+b_i+1}\id=\id
				\end{equation}
				The zigzag morphism $m(\gamma)$ is the 
				composition of $d_{\sigma_0}^{\tau_1}$ and all the morphisms \eqref{eq_zigzagpiece}, which
				are just identities.
				Now, the choice of $\tau_1$ (and therefore of the path $\gamma$) 
				amounts to the choice of which symbol 1 in $\sigma$ 
				we turn into 0 among those corresponding the letters $s$ in \eqref{eq_formxx}, 
				or, equivalently, the choice of the red strand over which to put a dot.
				Going from left to right, the signs will alternate.
				So if $a$ is the number of strand on the left of the first $s$, we get:
				\begin{equation}\label{eq_final}
					\tilde{d}_\sigma^\tau = \sum_\gamma m(\gamma) = 
					(-1)^a \id_{C_{\uw_1}}(B - A + A - \dots +(-1)^{k-1} A -C)\id_{C_{\uw_2}},
				\end{equation}
				where $k+1$ is the number of $s$'s in $\ux$. Consider the morphism inside the parenthesis.
				If $k$ is odd, the $A$ terms cancel and we get $(B-C)$. 
				If it is even we get $(B-A+C)$. (If it is 0 then it is just the red dot).
				This is precisely the morphism $d_{s,k}$ from \S \ref{subs_reducedcom}.
				Notice that $a=\ell(\uw_1)$, so \eqref{eq_final} gives exactly $d_{\ux}^{\uz}$.
			\end{proof}
	\printbibliography[heading = bibintoc]
\end{document}